\numberwithin{equation}{section}
\numberwithin{figure}{section}
\theoremstyle{plain}
\newtheorem{thm}{\protect\theoremname}
\theoremstyle{definition}
\newtheorem{defn}[thm]{\protect\definitionname}
\theoremstyle{plain}
\newtheorem{lem}[thm]{\protect\lemmaname}
\providecommand{\definitionname}{Definition}
\providecommand{\lemmaname}{Lemma}
\providecommand{\theoremname}{Theorem}
\begin{document}
\title{cartesian product of some combinatorially rich sets}
\author{sayan goswami}
\address{{\large{}Department of Mathematics, University of Kalyani, Kalyani-741235,
Nadia, West Bengal, India.}}
\email{{\large{}sayan92m@gmail.com}}
\keywords{{\large{}Central set, $J-$set and $C-$set.}}
\begin{abstract}
{\Large{}N. Hindman and D. Strauss had shown that, for discrete semigroups,
the cartesian product of two central sets are central. They also proved
that the product of $J$-sets and $C$-sets are also $J$-set and
$C$-set and characterized when the infinite product of these sets
are preserved. To prove these results they used the algebraic structure
of Stone-\v{C}ech compactification of discrete semigroups. In this
work we will give a combinatorial proof of the preserveness of those
large sets under finite cartesian product. }{\Large\par}
\end{abstract}

\maketitle

\section{\textbf{\Large{}introduction}}

{\Large{}A subset $S$ of $\mathbb{Z}$ is called syndetic if there
exists $r\in\mathbb{N}$ such that $\bigcup_{i=1}^{r}\left(S-i\right)=\mathbb{Z}$
and it is called thick if it contains arbitrary long intervals in
it. Sets which can be expressed as intersection of thick and syndetic
sets are called piecewise syndetic sets.}{\Large\par}

{\Large{}For a general semigroup $\left(S,\cdot\right)$, a set $A\subseteq S$
is said to be right syndetic in $\left(S,\cdot\right)$, if there
exists a finite nonempty set $F\subseteq S$ such that $\bigcup_{t\in F}t^{-1}A=S$
where $t^{-1}A=\left\{ s\in S:t\cdot s\in A\right\} $. A set $A\subseteq S$
is said to be right thick if for every finite nonempty set $E\subseteq S$,
there exists an element $x\in S$ such that $E\cdot x\subseteq A$.
A set $A\subseteq S$ is said to be right piecewise syndetic set if
there exist a finite nonempty set $F\subseteq S$ such that $\bigcup_{t\in F}t^{-1}A$
is right thick in $S$. It can be proved that a right piecewise syndetic
set is the intersection of a right thick set and a right syndetic
set.}{\Large\par}

{\Large{}There is an analogous notion of left syndetic, left thick
and left piecewise syndetic sets. For commutative semigroup this two
notions coincide. In the rest of the paper, until confusion arises,
we will drop the word 'right' and simply call a 'large' sets instead
of a 'right large' set. }{\Large\par}

{\Large{}Here we give a brief introduction of the algebraic structure
of Stone-\v{C}ech compactification of discrete semigroups $\left(S,\cdot\right)$.}{\Large\par}

{\Large{}Let $(S,\cdot)$ be a countable discrete semigroup and $\beta S$,
be the set of ultrafilters on $S$, identifying the principal ultrafilters
with the points of $S$ and thus pretending that $S\subseteq\beta S$.
Given$A\subseteq S$ let us set, 
\[
\overline{A}=\{p\in\beta S:A\in p\}.
\]
 Then the set $\{\overline{A}:A\subseteq S\}$ is a basis for a topology
on $\beta S$. The operation $\cdot$ on $S$ can be extended to the
Stone-\v{C}ech compactification $\beta S$ of $S$ so that $(\beta S,\cdot)$
is a compact right topological semigroup (meaning that for any $p\in\beta S$,
the function $\rho_{p}:\beta S\rightarrow\beta S$ defined by $\rho_{p}(q)=q\cdot p$
is continuous) with $S$ contained in its topological center (meaning
that for any $x\in S$, the function $\lambda_{x}:\beta S\rightarrow\beta S$
defined by $\lambda_{x}(q)=x\cdot q$ is continuous). Given $p,q\in\beta S$
and $A\subseteq S$, $A\in p\cdot q$ if and only if $\{x\in S:x^{-1}A\in q\}\in p$,
where $x^{-1}A=\{y\in S:x\cdot y\in A\}$. }{\Large\par}

{\Large{}A nonempty subset $I$ of a semigroup $(T,\cdot)$ is called
a left ideal of $\emph{T}$ if $T\cdot I\subset I$, a right ideal
if $I\cdot T\subset I$, and a two sided ideal (or simply an ideal)
if it is both a left and right ideal. A minimal left ideal is the
left ideal that does not contain any proper left ideal. Similarly,
we can define minimal right ideal and smallest ideal.}{\Large\par}

{\Large{}Any compact Hausdorff right topological semigroup $(T,.)$
has a smallest two sided ideal}{\Large\par}

{\Large{}
\[
\begin{array}{ccc}
K(T) & = & \bigcup\{L:L\text{ is a minimal left ideal of }T\}\\
 & = & \,\,\,\,\,\bigcup\{R:R\text{ is a minimal right ideal of }T\}
\end{array}
\]
}{\Large\par}

{\Large{}Given a minimal left ideal $L$ and a minimal right ideal
$R$, $L\cap R$ is a group, and in particular contains an idempotent.
An idempotent in $K(T)$ is called a minimal idempotent. If $p$ and
$q$ are idempotents in $T$, we write $p\leq q$ if and only if $p\cdot q=q\cdot p=p$.
An idempotent is minimal with respect to this relation if and only
if it is a member of the smallest ideal. }{\Large\par}

{\Large{}A quasi-central set is generally defined in terms of algebraic
structure of $\beta S$. }{\Large\par}
\begin{defn}
{\Large{}\label{Definition 1.1} \cite[Definition 1.2]{key-5} Let
$(S,\cdot)$ be a semigroup and let $A\subseteq S$. Then $A$ is
quasi-central if and only if there is some idempotent $p\in cl(K(\beta S))$
with $p\in\overline{A}.$}{\Large\par}
\end{defn}

{\Large{}But it has an combinatorial characterization which will be
needed for our purpose, stated below.}{\Large\par}
\begin{thm}
{\Large{}\label{Theorem 1.2} \cite[Theorem 3.8 (4)]{key-5} In an
infinite semigroup $\left(S,\cdot\right)$, $A\subseteq S$ is said
to be Quasi-central iff there is a decreasing sequence $\langle C_{n}\rangle_{n=1}^{\infty}$
of subsets of $A$ such that,}{\Large\par}
\begin{enumerate}
\item {\Large{}\label{2.1} for each $n\in\mathbb{N}$ and each $x\in C_{n}$,
there exists $m\in\mathbb{N}$ with $C_{m}\subseteq x^{-1}C_{n}$
and}{\Large\par}
\item {\Large{}\label{2.2} $C_{n}$ is piecewise syndetic $\forall\,n\in\mathbb{N}$.}{\Large\par}
\end{enumerate}
\end{thm}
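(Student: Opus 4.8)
The plan is to prove both implications through the algebraic definition (Definition \ref{Definition 1.1}), translating between the algebra of $\beta S$ and the combinatorics by means of two standard facts: (i) a set $B\subseteq S$ is piecewise syndetic if and only if $\overline{B}\cap K(\beta S)\neq\emptyset$, so that $p\in cl(K(\beta S))$ holds exactly when every member of $p$ is piecewise syndetic; and (ii) the idempotent lemma, namely that if $p=p\cdot p$ and $B\in p$, then, writing $B^{\star}=\{x\in B:x^{-1}B\in p\}$, one has $B^{\star}\in p$ and $x^{-1}B^{\star}\in p$ for every $x\in B^{\star}$, so that $B^{\star}$ is ``self-starred'' in the sense that $(B^{\star})^{\star}=B^{\star}$. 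I will also use Ellis' theorem (every compact right topological semigroup contains an idempotent) together with the fact that $cl(K(\beta S))$ is a two-sided ideal, hence a closed subsemigroup of $\beta S$.

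For the direction ``sequence $\Rightarrow$ quasi-central'', set $E=\bigcap_{n}\overline{C_{n}}$. Since the $C_{n}$ are nonempty (being piecewise syndetic) and decreasing, $\{\overline{C_{n}}\}$ is a decreasing family of nonempty closed sets, so $E$ is nonempty and closed by compactness of $\beta S$. The role of condition (1) is to make $E$ a subsemigroup: given $p,q\in E$ and $n\in\mathbb{N}$, for each $x\in C_{n}$ condition (1) produces $m$ with $C_{m}\subseteq x^{-1}C_{n}$, and $C_{m}\in q$ forces $x^{-1}C_{n}\in q$; hence $C_{n}\subseteq\{x:x^{-1}C_{n}\in q\}$, which lies in $p$, so $C_{n}\in p\cdot q$ and $p\cdot q\in E$. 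The role of condition (2) is to push $E$ into the closure of the smallest ideal: each piecewise syndetic $C_{n}$ gives $\overline{C_{n}}\cap cl(K(\beta S))\neq\emptyset$, and a second compactness argument yields $E\cap cl(K(\beta S))\neq\emptyset$. Being the intersection of the subsemigroup $E$ with the ideal $cl(K(\beta S))$, this set is a nonempty compact subsemigroup, so by Ellis it contains an idempotent $p$; then $C_{1}\subseteq A$ gives $A\in p\in cl(K(\beta S))$, whence $A$ is quasi-central.

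For the direction ``quasi-central $\Rightarrow$ sequence'', fix an idempotent $p\in cl(K(\beta S))$ with $A\in p$. Every set I produce will lie in $p$ and hence be piecewise syndetic by fact (i), so condition (2) is automatic; the whole difficulty is condition (1). The key observation is that condition (1) is strictly stronger than the statement ``$x^{-1}C_{n}\in p$'': it requires $x^{-1}C_{n}$ to contain an \emph{entire} set $C_{m}$. This forces each $C_{n}$ to be self-starred, and the naive tower $C_{n+1}=(C_{n})^{\star}$ fails, since the self-starred sets only shrink and their infinite intersection need not lie in $p$. Instead I would diagonalize, using countability of $S$: enumerate all pairs as $\langle(n_{t},x_{t})\rangle_{t\ge1}$ with $n_{t}\le t$, put $C_{1}=A^{\star}$, and at stage $t$ (having built a self-starred $C_{t}\in p$) set $C_{t+1}=(C_{t}\cap x_{t}^{-1}C_{n_{t}})^{\star}$ whenever $x_{t}\in C_{n_{t}}$ (legitimate because self-starredness of $C_{n_{t}}$ gives $x_{t}^{-1}C_{n_{t}}\in p$), and $C_{t+1}=(C_{t})^{\star}$ otherwise. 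By the idempotent lemma each $C_{t+1}$ is self-starred and lies in $p$, the sequence decreases, and for any $n$ and any $x\in C_{n}$ the stage $t$ handling the pair $(n,x)$ delivers $C_{t+1}\subseteq x^{-1}C_{n}$, which is exactly condition (1).

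The main obstacle is precisely this forward construction. The naive star-tower does not work, and one has to recognise that condition (1) is a containment rather than a largeness statement, engineer self-starred levels via the idempotent lemma, and interleave the countably many requirements ``$x^{-1}C_{n}\supseteq C_{m}$'' through a diagonal argument that rests on the countability of $S$ assumed in the setup. By contrast, the two compactness/Ellis arguments in the converse direction are routine once one sees that $E$ is a subsemigroup meeting $cl(K(\beta S))$.
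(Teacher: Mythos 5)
The paper never proves Theorem \ref{Theorem 1.2}: it is imported with a citation from Hindman, Maleki and Strauss \cite{key-5}, so there is no internal proof to compare yours against. Your argument is correct, and it is essentially the standard argument of that source: sufficiency via the compact set $E=\bigcap_{n}\overline{C_{n}}$, which condition (1) makes a subsemigroup and condition (2) forces to meet the ideal $cl(K(\beta S))$, followed by Ellis' theorem applied to $E\cap cl(K(\beta S))$; necessity via star-sets $B^{\star}=\{x\in B:x^{-1}B\in p\}$ of the witnessing idempotent and a diagonal enumeration of $\mathbb{N}\times S$ that handles each requirement $C_{m}\subseteq x^{-1}C_{n}$ at its own stage. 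One caveat worth recording: your necessity argument (like the original) genuinely needs countability of $S$, whereas the statement as transcribed in this paper says only ``infinite semigroup''; the hypothesis in \cite{key-5} is countability, so your proof establishes the theorem as actually proved there rather than the looser phrasing here.
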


{\Large{}In theorem \ref{Theorem 2.16}, we will show combinatoriallly
that product of two quasi-central set is quasi-central. }{\Large\par}

{\Large{}The definition of central set in \cite{key-5} was in terms
of topological dynamics and the definition makes sense in any semigroup.
It has a simple algebraic characterization which states that,}{\Large\par}
\begin{defn}
{\Large{}\label{definition 1.3}\cite[Definition 4.42, page 102]{key-7}
Let $(S,\cdot)$ be a semigroup and let $A\subseteq S$. Then $A$
is central if and only if there is some minimal idempotent $p\in K\left(\beta S\right)$
with $p\in\overline{A}.$}{\Large\par}
\end{defn}

{\Large{}To say the combinatorial characterization of central set
we have to know about the notion of collectionwise piecewise syndetic
sets.}{\Large\par}

{\Large{}Here we first give a combinatorial characterization generalizing
the notion of piecewise syndetic sets. }{\Large\par}

{\Large{}The notation $\mathscr{P}_{f}(S)$ means, the collection
of non-empty finite subsets of $S$.}{\Large\par}
\begin{defn}
{\Large{}\label{definition 1.4} \cite[Definition 14.19, page 353]{key-7}
Let $(S,\cdot)$ be a semigroup and let $\mathscr{A}\subseteq\mathscr{P}(S)$.
Then $\mathscr{A}$ is a $\mathit{collection}wise\,piecewise\,syndetic$
iff there exists functions $G:\mathscr{P}_{f}(\mathscr{A})\rightarrow\mathscr{P}_{f}(S)$
and $x:\mathscr{P}_{f}(\mathscr{A})\times\mathscr{P}_{f}(S)\rightarrow S$
such that for all $F\in\mathscr{P}_{f}(S)$ and all $\mathscr{F}$
and $\mathscr{H}$ in $\mathscr{P}_{f}(\mathscr{A})$ with $\mathscr{F}\subseteq$$\mathscr{H}$one
has $F\cdot x(\mathscr{F},\mathscr{H})\subseteq\bigcup_{t\in G(\mathscr{F})}t^{-1}(\bigcap\mathscr{F})$.}{\Large\par}
\end{defn}

{\Large{}We will use the following definition in our proof,}{\Large\par}
\begin{defn}
{\Large{}\label{definition 1.5} \cite[Page 63]{key11} A family $\mathcal{I}$
of subsets of $S$ is said to be collectionwise piecewise syndetic
if for each $A\in\mathcal{I}$, there exists a finite set $K_{A}\subset S$
such that $\left\{ K_{A}^{-1}A:A\in\mathcal{I}\right\} $ is collectionwise
thick, i.e, intersection of any finite sub-family of $\left\{ K_{A}^{-1}A:A\in\mathcal{I}\right\} $
is thick in $S$.}{\Large\par}
\end{defn}

{\Large{}So every member of a family which is $\mathit{collection}wise\,piecewise\,syndetic$,
is $piecewise\,syndetic$. The following one is the combinatorial
characterization of central set which we will use in future.}{\Large\par}
\begin{thm}
{\Large{}\label{Theorem 1.6} \cite[Theorem 3.8 (5)]{key-7} For a
countable semigroup $\left(S,.\right)$, $A\subseteq S$ is said to
be central iff there is a decreasing sequence $\langle C_{n}\rangle_{n=1}^{\infty}$
of subsets of $A$ such that,}{\Large\par}
\begin{enumerate}
\item {\Large{}\label{6.1} for each $n\in\mathbb{N}$ and each $x\in C_{n}$,
there exists $m\in\mathbb{N}$ with $C_{m}\subseteq x^{-1}C_{n}$
and}{\Large\par}
\item {\Large{}\label{6.2} $C_{n}$ is collectionwise piecewise syndetic
$\forall\,n\in\mathbb{N}$.}{\Large\par}
\end{enumerate}
\end{thm}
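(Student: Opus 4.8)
The plan is to derive this characterization from the algebraic structure of $\beta S$, translating the purely combinatorial conditions (1) and (2) into statements about membership in, and idempotency inside, a suitable closed subsemigroup of $\beta S$. Throughout I would use the standard correspondence between piecewise syndeticity and the smallest ideal, and, more importantly, the identification of collectionwise piecewise syndeticity of a family with the existence of a single $q \in K(\beta S)$ containing that whole family; this is the bridge that makes condition (2) usable in both directions.

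For the reverse direction, suppose a decreasing sequence $\langle C_{n}\rangle$ with $C_{n}\subseteq A$ satisfying (1) and (2) is given. I would set $E=\bigcap_{n=1}^{\infty}\overline{C_{n}}$, a nonempty compact subset of $\beta S$ (the $\overline{C_{n}}$ are nested nonempty clopen sets) contained in $\overline{A}$ since $C_{1}\subseteq A$. The first key step is to show, using (1), that $E$ is a subsemigroup: given $p,q\in E$ and $n\in\mathbb{N}$, for each $x\in C_{n}$ property (1) furnishes $m$ with $C_{m}\subseteq x^{-1}C_{n}$, and since $C_{m}\in q$ this gives $x^{-1}C_{n}\in q$; thus $C_{n}\subseteq\{x:x^{-1}C_{n}\in q\}$, and as $C_{n}\in p$ we conclude $C_{n}\in p\cdot q$, i.e. $p\cdot q\in E$. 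The second key step is to locate a minimal idempotent in $E$: condition (2) yields some $q\in K(\beta S)$ with every $C_{n}\in q$, so $E\cap K(\beta S)\neq\emptyset$. For a compact right topological subsemigroup meeting $K(\beta S)$ one has $K(E)=E\cap K(\beta S)$, so any idempotent of the smallest ideal of $E$ is a minimal idempotent of $\beta S$ lying inside $E\subseteq\overline{A}$, which certifies that $A$ is central in the sense of Definition \ref{definition 1.3}.

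For the forward direction, suppose $A$ is central, fix a minimal idempotent $p\in K(\beta S)$ with $A\in p$, and write $B^{\star}=\{x\in B:x^{-1}B\in p\}$. I would use the standard lemma that $B^{\star}\in p$ whenever $B\in p$, together with its consequence $A^{\star}=(A^{\star})^{\star}$, so that $A^{\star}$ is a member of $p$ every element of which shifts it back into $p$. Starting from $C_{1}=A^{\star}\subseteq A$ I would build the sequence by a diagonal construction: enumerate all pairs $(x,k)$ with $x\in S$ and $k\in\mathbb{N}$ so that $(x,k)$ is treated only after $C_{k}$ has been defined, and at each stage set $C_{n+1}=(C_{n}\cap x^{-1}C_{k})^{\star}$ when $x\in C_{k}$ (legitimate, since then $x^{-1}C_{k}\in p$) and $C_{n+1}=C_{n}^{\star}$ otherwise. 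Each $C_{n}$ is then a member of $p$ equal to its own $\star$, which guarantees $x^{-1}C_{n}\in p$ for every $x\in C_{n}$; processing the demand $(x,k)$ forces $C_{n+1}\subseteq x^{-1}C_{k}$, delivering (1). Condition (2) is immediate, since all the $C_{n}$ lie in the single minimal ultrafilter $p$ and hence form a collectionwise piecewise syndetic family.

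I expect the main obstacle to be this forward construction, specifically arranging the tree condition (1) for \emph{every} $x\in C_{n}$ rather than only for the $\star$-points: it is precisely this requirement that forces the self-stabilizing identity $A^{\star}=(A^{\star})^{\star}$ and the careful bookkeeping that ensures each demand $(x,k)$ is eventually met while keeping every $C_{n}$ in $p$. A secondary but essential ingredient in both directions is the correspondence between the combinatorial definition of collectionwise piecewise syndeticity (Definitions \ref{definition 1.4} and \ref{definition 1.5}) and the algebraic statement that such a family is exactly one contained in some member of $K(\beta S)$; I would treat this equivalence as the known input underpinning condition (2).
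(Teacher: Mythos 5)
You cannot be compared against an in-paper argument here, because the paper never proves Theorem \ref{Theorem 1.6}: it is quoted as a known characterization (cited to \cite{key-7}) and then used as a black box in Section 2. Judged on its own, your proof is correct, and it is essentially the standard argument from the cited literature. The reverse direction --- forming $E=\bigcap_{n=1}^{\infty}\overline{C_{n}}$, using property (1) to show $E$ is a closed subsemigroup, using property (2) to place some $q\in K(\beta S)$ inside $E$, and then applying $K(E)=E\cap K(\beta S)$ together with the existence of idempotents in the smallest ideal of a compact right topological semigroup to produce a minimal idempotent in $\overline{A}$ --- is exactly the standard route and correctly certifies centrality in the sense of Definition \ref{definition 1.3}. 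The forward direction via $B^{\star}=\{x\in B:x^{-1}B\in p\}$, the stability $(A^{\star})^{\star}=A^{\star}$, and the countable enumeration of demands $(x,k)$ is also sound: since every $C_{n}$ you build equals its own star, every $x\in C_{n}$ satisfies $x^{-1}C_{n}\in p$, the dovetailed processing of demands yields (1), and membership of all $C_{n}$ in the single minimal ultrafilter $p$ yields (2); this is also precisely where the countability hypothesis enters, as it should. The one ingredient you lean on in both directions but do not prove is the equivalence between collectionwise piecewise syndeticity of a family and the existence of a single $q\in K(\beta S)$ containing the whole family; that is a nontrivial theorem in \cite{key-7}, not a definition-chase, so a fully self-contained write-up would need to include its proof. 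Since you flag it explicitly as known input, and the paper itself treats the same circle of facts (Definitions \ref{definition 1.4} and \ref{definition 1.5}) as background, this reliance is a reasonable choice rather than a gap.
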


{\Large{}In theorem \ref{Theorem 2.17}, we will show combinatoriallly
that product of two central set is central. }{\Large\par}

{\Large{}The following famous central set theorem is due to H. Furstenberg.}{\Large\par}
\begin{thm}
{\Large{}\label{ (Furstenberg-Central Sets Theorem )} \cite{key-3}
Let $A$ be a central subset of $\mathbb{N}$, let $k\in\mathbb{N}$
and for each $i\in\left\{ 1,2,...,k\right\} $, let $\left\langle y_{l,n}\right\rangle _{n=1}^{\infty}$
be a sequence in $\mathbb{Z}$. There exist sequences $\left\langle a_{n}\right\rangle _{n=1}^{\infty}$
in $\mathbb{N}$ and $\left\langle H_{n}\right\rangle _{n=1}^{\infty}$
in $P_{f}\left(\mathbb{N}\right)$ such that }{\Large\par}

{\Large{}(1) for each $n,\:\max H_{n}<\min H_{n+1}$ and }{\Large\par}

{\Large{}(2) for each $i\in\left\{ 1,2,...,k\right\} $ and each $F\in P_{f}\left(\mathbb{N}\right)$,
\[
\sum_{n\in F}\left(a_{n}+\sum_{t\in H_{n}}y_{i,t}\right)\in A.
\]
}{\Large\par}
\end{thm}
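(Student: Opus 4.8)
The plan is to run the whole argument on the algebraic characterisation of centrality used throughout the paper. Since $A$ is central, I would fix a minimal idempotent $p\in K(\beta\mathbb{N})$ with $A\in p$ and pass to the sharpened set $A^{\star}=\{x\in A:-x+A\in p\}$. The two facts I would lean on repeatedly are that $A^{\star}\in p$ and that for every $x\in A^{\star}$ one has $-x+A^{\star}\in p$; both follow from $p+p=p$ and are the engine behind every finite-sum construction of this type. In the combinatorial spirit of the paper one could instead run the identical induction against the decreasing sequence $\langle C_n\rangle$ furnished by Theorem~\ref{Theorem 1.6}, using the shift-recurrence (\ref{6.1}) in place of the $A^{\star}$-lemma and the collectionwise piecewise syndeticity (\ref{6.2}) precisely where minimality is needed.

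The construction of $\langle a_n\rangle$ and $\langle H_n\rangle$ is by induction on $n$, maintaining the invariant that $\max H_n<\min H_{n+1}$ and that for every nonempty $F\subseteq\{1,\dots,n\}$ and every $i\in\{1,\dots,k\}$ the sum $w(i,F)=\sum_{m\in F}\bigl(a_m+\sum_{t\in H_m}y_{i,t}\bigr)$ lies in $A^{\star}$. Granting this for all $n$, conclusion~(1) is built into the construction and conclusion~(2) is immediate since $A^{\star}\subseteq A$.

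For the inductive step, suppose $a_1,\dots,a_n$ and $H_1,\dots,H_n$ have been chosen. The finitely many sums $w(i,F)$ with $\emptyset\ne F\subseteq\{1,\dots,n\}$ all lie in $A^{\star}$, so each $-w(i,F)+A^{\star}\in p$, whence
\[
B \;=\; A^{\star}\cap\bigcap_{i,F}\bigl(-w(i,F)+A^{\star}\bigr)\in p .
\]
The step then reduces to the single crux of finding $a_{n+1}\in\mathbb{N}$ and a finite set $H_{n+1}$ with $\min H_{n+1}>\max H_n$ such that $a_{n+1}+\sum_{t\in H_{n+1}}y_{i,t}\in B$ for every $i\in\{1,\dots,k\}$. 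Once this is in hand, splitting an arbitrary $F\subseteq\{1,\dots,n+1\}$ according to whether it contains $n+1$, and using $B\subseteq A^{\star}$ together with $B\subseteq -w(i,F')+A^{\star}$, shows the invariant survives.

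This crux is the only genuine obstacle, and it is exactly where minimality of $p$ (equivalently, the collectionwise piecewise syndeticity of property~(\ref{6.2})) must be spent: a single shift $a_{n+1}$ and a single block $H_{n+1}$ must serve all $k$ sequences simultaneously, whereas the partial sums $\sum_{t\in H}y_{i,t}$ vary independently in $i$. I expect to resolve it by encoding the shift together with the $k$ partial sums into a suitable product semigroup (for instance $\mathbb{N}\times\bigoplus_{n}\mathbb{Z}^{k}$, a setting to which the Cartesian-product constructions of this paper apply), lifting $p$ to a minimal idempotent there and reading off the simultaneous solution from its recurrence; alternatively one argues directly that the family of admissible blocks $H$ is partition-regular in $(P_f(\mathbb{N}),\uplus)$ and, by the piecewise syndetic structure of $B$, large enough to be met by a common $a_{n+1}$. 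All the weight of the theorem sits in this simultaneous-recurrence lemma; the surrounding induction is bookkeeping.
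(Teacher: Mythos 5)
This statement is quoted background in the paper (it is cited to Furstenberg \cite{key-3} and never proved there), so the only question is whether your argument stands on its own. Your reduction is the standard one and is carried out correctly: fixing a minimal idempotent $p$ with $A\in p$, passing to $A^{\star}=\{x\in A:-x+A^{\star}\in p\text{-sense}\}$, and running the induction so that every sum $w(i,F)$ stays in $A^{\star}$ is exactly the usual bookkeeping, and your identification of the set $B=A^{\star}\cap\bigcap_{i,F}\left(-w(i,F)+A^{\star}\right)\in p$ as the target for the next step is right. The problem is that everything you have actually proved is the easy part. The ``single crux'' you isolate --- given $B\in p$ and the $k$ sequences, find one $a_{n+1}$ and one block $H_{n+1}$ beyond $\max H_n$ with $a_{n+1}+\sum_{t\in H_{n+1}}y_{i,t}\in B$ simultaneously for all $i$ --- is precisely the assertion that every member of a minimal idempotent is a $J$-set (Definition \ref{definition 1.10}), and that assertion \emph{is} the Central Sets Theorem. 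Deferring it with ``I expect to resolve it by\ldots'' leaves the entire mathematical content unproved.

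Neither of your two suggested attacks closes this gap as stated. Lifting $p$ to a minimal idempotent of a product semigroup such as $\mathbb{N}\times\bigoplus_{n}\mathbb{Z}^{k}$ does not help, because an arbitrary minimal idempotent there has coordinates that are completely uncoupled: nothing forces its components to be realized by a \emph{common} shift $a$ and a \emph{common} block $H$ applied to the $k$ different sequences, and installing that coupling is the whole difficulty. The proofs that actually work do so either by Furstenberg--Katznelson's dynamical/IP machinery, or by showing that the diagonal copy of a minimal idempotent lies in $\bigcap_{m}c\ell\left\{ \left(a,a+\sum_{t\in H}y_{1,t},\ldots,a+\sum_{t\in H}y_{k,t}\right):a\in\mathbb{N},\,\min H>m\right\}$ inside $(\beta\mathbb{N})^{k+1}$ via a two-sided ideal argument, or by the De--Hindman--Strauss combinatorial proof that piecewise syndetic sets are $J$-sets; some such argument must be supplied, not gestured at. Your alternative suggestion fares no better: Hindman's theorem gives partition regularity of finite unions in $(\mathcal{P}_{f}(\mathbb{N}),\uplus)$, but it cannot by itself produce the common shift $a$ serving all $k$ sequences; and note that Theorem \ref{Theorem 3.21} of this paper, which looks close to what you want, \emph{assumes} that $A$ is a $J$-set --- it offers no route from centrality (Theorem \ref{Theorem 1.6}) to the $J$-set property. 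Until that implication is proved, your argument is a correct reduction of the theorem to itself.
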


{\Large{}The following theorem is called central set theorem for commutative
semigroup.}{\Large\par}
\begin{thm}
{\Large{}\label{Theorem 1.8} \cite[Theorem 14.11, page 340]{key-7}
Let $\left(S,+\right)$ be a countable commutative semigroup, let
$A$ be a central set in $S$, and for each $l\in\mathbb{N}$, let
$\left\langle y_{l,n}\right\rangle _{n=1}^{\infty}$ be a sequence
in $S$. There exist a sequence $\left\langle a_{n}\right\rangle _{n=1}^{\infty}$
in $S$ and a sequence $\left\langle H_{n}\right\rangle _{n=1}^{\infty}$
in $\mathcal{P}_{f}\left(\mathbb{N}\right)$ such that $\max H_{n}<\min H_{n+1}$
for each $n\in\mathbb{N}$ and such that for each $f\in\Phi$, 
\[
FS\left(\left\langle a_{n}+\sum_{t\in H_{n}}y_{f\left(n\right),t}\right\rangle _{n=1}^{\infty}\right)\subseteq A,
\]
 where $\Phi$ is the set of all functions $f:\mathbb{N\rightarrow N}$
for which $f\left(n\right)\leq n$ for all $n\in\mathfrak{\mathbb{N}}$}.
\end{thm}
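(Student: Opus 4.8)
The plan is to reduce the topological-dynamical hypothesis to a purely combinatorial recursion driven by the idempotent ultrafilter hiding inside a central set. First I would invoke the algebraic characterization of Definition \ref{definition 1.3} to fix a minimal idempotent $p \in K(\beta S)$ with $A \in p$, and pass to the set $A^{\star} = \{x \in A : -x + A \in p\}$, where $-x + A$ abbreviates the preimage $\{y \in S : x + y \in A\}$. The two facts I will lean on are that $A^{\star} \in p$ and that $-x + A^{\star} \in p$ for every $x \in A^{\star}$; both follow at once from $p + p = p$ and constitute the algebraic engine of the induction.

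The heart of the argument is the combinatorial lemma that every member of a minimal idempotent is a $J$-set: for any $B \in p$ and any finite family of sequences $\langle z_{l,n}\rangle_{n=1}^{\infty}$ ($l \le k$) in $S$, there exist $a \in S$ and $H \in \mathcal{P}_f(\mathbb{N})$ with $\min H$ as large as desired such that $a + \sum_{t \in H} z_{l,t} \in B$ for every $l \le k$ simultaneously. The freedom to force $\min H > N$ comes from applying the $J$-set property to the shifted sequences $n \mapsto z_{l,n+N}$. This is the step I expect to be the main obstacle: the sequences are given adversarially and all $k$ sums must land in $B$ at once, so it cannot be handled by a naive pigeonhole, especially as $S$ need not admit cancellation. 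This is exactly where minimality of $p$ is indispensable — combinatorially, it is the inclusion $K(\beta S) \subseteq J(S)$ of the smallest ideal into the closed ideal of $J$-ultrafilters, which is the genuinely hard content.

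With the lemma in hand I would build $\langle a_n\rangle$ and $\langle H_n\rangle$ recursively, carrying the hypothesis that every partial sum $s = \sum_{i \in F}\bigl(a_i + \sum_{t \in H_i} y_{f(i),t}\bigr)$, taken over nonempty $F \subseteq \{1,\dots,n\}$ and $f \in \Phi$, lies in $A^{\star}$. At stage $n+1$ these finitely many sums form a finite set $\Sigma \subseteq A^{\star}$ (finite since each $f(i) \in \{1,\dots,i\}$), so $B = A^{\star} \cap \bigcap_{s \in \Sigma}(-s + A^{\star})$ is again in $p$. Applying the lemma to this $B$ and to the sequences $\langle y_{l,\cdot}\rangle$ for $l \le n+1$ then yields $a_{n+1}$ and $H_{n+1}$ with $\min H_{n+1} > \max H_n$ and $a_{n+1} + \sum_{t \in H_{n+1}} y_{l,t} \in B$ for all $l \le n+1$.

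To close the induction I would verify the hypothesis for $F \subseteq \{1,\dots,n+1\}$ and $f \in \Phi$. If $n+1 \notin F$ it is the old hypothesis; if $F = \{n+1\}$ the sum lies in $B \subseteq A^{\star}$, which is precisely what the lemma delivers since $f(n+1) \le n+1$; and if $F = F' \cup \{n+1\}$ with $\emptyset \ne F' \subseteq \{1,\dots,n\}$, then writing $s$ for the $F'$-sum one has $s \in \Sigma$ while $a_{n+1} + \sum_{t \in H_{n+1}} y_{f(n+1),t} \in B \subseteq -s + A^{\star}$, so commutativity reorganizes the whole expression as $s + (\cdots) \in A^{\star}$. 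Because $A^{\star} \subseteq A$, the resulting sequences $\langle a_n\rangle$ and $\langle H_n\rangle$ witness the theorem for every $f \in \Phi$, the strict inequality $\max H_n < \min H_{n+1}$ having been enforced at each stage.
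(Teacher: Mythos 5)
The paper never proves Theorem \ref{Theorem 1.8}: it is quoted as known background from \cite[Theorem 14.11]{key-7}, and nothing in Sections 2--3 of the paper depends on reproving it. So the only meaningful comparison is with the proof in that reference, and your proposal is essentially that proof. You fix a minimal idempotent $p$ with $A\in p$, pass to $A^{\star}=\{x\in A:-x+A\in p\}$ (so $A^{\star}\in p$, and $-x+A^{\star}\in p$ for each $x\in A^{\star}$, the second fact being the standard Lemma 4.14 of \cite{key-7}), and run the finite-sums recursion, at stage $n+1$ intersecting $A^{\star}$ with the finitely many translates $-s+A^{\star}$, $s\in\Sigma$, and applying the $J$-set property to the first $n+1$ sequences. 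Your bookkeeping is correct: the restriction $f(n)\leq n$ is exactly what keeps $\Sigma$ finite, the shift trick (applying the $J$-set property to $n\mapsto z_{l,n+N}$) legitimately forces $\min H_{n+1}>\max H_{n}$, and the three-case verification of the induction hypothesis closes properly, with $A^{\star}\subseteq A$ finishing the argument.

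The one caveat is that the entire combinatorial weight of the theorem rests on the lemma you invoke but do not prove: that every member of a minimal idempotent is a $J$-set, i.e.\ the inclusion $K(\beta S)\subseteq J(S)$, which follows from the facts that members of ultrafilters in $K(\beta S)$ are piecewise syndetic and that piecewise syndetic subsets of a commutative semigroup are $J$-sets. You flag this correctly as the hard content rather than hiding it, and the paper itself treats the same fact as citable background (it asserts without proof that $J(S)$ is a compact two-sided ideal of $\beta S$, which yields the inclusion). If citing that standard result is permitted, your proof is complete and is the same argument as in \cite{key-7}; if the proof must be self-contained, then that lemma --- not the recursion, which is routine --- is what remains, and it requires a genuine combinatorial argument (of Hales--Jewett/van der Waerden type), not a soft ultrafilter manipulation.
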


{\Large{}The following central set theorem is the strengthening of
the above central set theorem for general commutative semigroup:}{\Large\par}
\begin{thm}
{\Large{}\label{( Stronger Central Sets Theorem )}\cite[Theorem 14.8.4, page 337]{key-7}
Let $\left(S,+\right)$ be a commutative semigroup. Let $C$ be a
central subset of $S$. Then there exist functions $\alpha:\mathcal{P}_{f}\left(S^{\mathbb{N}}\right)\rightarrow\mathbb{N}$
such that }{\Large\par}

{\Large{}1) let $F,G\in\mathcal{P}_{f}\left(S^{\mathbb{N}}\right)$
and $F\subsetneq G$, then $\max H\left(F\right)<\min H\left(G\right)$,}{\Large\par}

{\Large{}2) whenever $r\in\mathbb{N}$, $G_{1},G_{2},...,G_{r}\in\mathcal{P}_{f}\left(^{\mathbb{N}}S\right)$
such that $G_{1}\subsetneq G_{2}\subsetneq.....\subsetneq G_{r}$
and for each $i\in\left\{ 1,2,....,r\right\} $, $f_{i}\in G_{i}$
one has 
\[
\sum_{i=1}^{r}\left(\alpha\left(G_{i}\right)+\sum_{t\in H\left(G_{i}\right)}f_{i}\left(t\right)\right)\in C.
\]
}{\Large\par}
\end{thm}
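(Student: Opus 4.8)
The plan is to argue directly from the algebraic definition of a central set (Definition \ref{definition 1.3}) rather than from the weaker central set theorems, because the uniformity demanded by condition (2)---where along a chain $G_{1}\subsetneq\cdots\subsetneq G_{r}$ the summand sequences $f_{i}\in G_{i}$ may be chosen independently---is most naturally produced by the idempotent structure of $\beta S$. First I would fix a minimal idempotent $p\in K(\beta S)$ with $C\in p$ and pass to
\[
C^{\star}=\{x\in C:-x+C\in p\}.
\]
The standard fact that an idempotent $p$ with $C\in p$ satisfies $C^{\star}\in p$ and $-x+C^{\star}\in p$ for every $x\in C^{\star}$ is the engine of the whole argument: it lets any partial sum that already lies in $C^{\star}$ be extended while keeping the total inside $C$.

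I would then construct the two functions $\alpha\colon\mathcal{P}_{f}(S^{\mathbb{N}})\to S$ and $H\colon\mathcal{P}_{f}(S^{\mathbb{N}})\to\mathcal{P}_{f}(\mathbb{N})$ by induction on $|F|$, carrying the single invariant that for every chain $G_{1}\subsetneq\cdots\subsetneq G_{k}=F$ of nonempty sets ending at $F$ and every choice $f_{i}\in G_{i}$ one has $\sum_{i=1}^{k}\bigl(\alpha(G_{i})+\sum_{t\in H(G_{i})}f_{i}(t)\bigr)\in C^{\star}$. Since a finite set has only finitely many proper subsets and each $G_{i}$ is finite, when it is time to define $\alpha(F)$ and $H(F)$ there are only finitely many partial sums $s$ arising from chains of proper subsets of $F$; by the inductive invariant each such $s$ lies in $C^{\star}$, so $-s+C^{\star}\in p$. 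Taking the finite intersection
\[
D=C^{\star}\cap\bigcap_{s}\bigl(-s+C^{\star}\bigr)\in p,
\]
I reduce the whole step to producing a single pair $(a,H)$ with $a+\sum_{t\in H}f(t)\in D$ for all $f\in F$ simultaneously.

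The heart of the step is this simultaneous choice. Here I would invoke that a minimal idempotent lies in the closed two-sided ideal $J(S)$ of $\beta S$ (as $K(\beta S)\subseteq J(S)$), so every member of $p$, and in particular $D$, is a $J$-set; the defining property of a $J$-set yields exactly such $a\in S$ and $H\in\mathcal{P}_{f}(\mathbb{N})$ for the finite family $F$. To force $\min H$ beyond the finite threshold $m=\max\{\max H(G'):\emptyset\neq G'\subsetneq F\}$ that condition (1) requires, I would apply the $J$-set property not to $F$ but to the shifted family $\{\,t\mapsto f(t+m):f\in F\,\}$ and re-index the resulting index set by adding $m$; this leaves the relevant sums unchanged while guaranteeing $\min H>m$. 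Setting $\alpha(F)=a$ and $H(F)=H$, commutativity lets me absorb each accumulated partial sum, since $s+\bigl(a+\sum_{t\in H}f(t)\bigr)\in s+(-s+C^{\star})=C^{\star}$, while the fresh chain $\{F\}$ is handled by $a+\sum_{t\in H}f(t)\in C^{\star}$ directly; this is precisely the invariant for $F$. As $C^{\star}\subseteq C$, the invariant over all $F$ delivers condition (2).

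The main obstacle is exactly the simultaneous extraction of one $(a,H)$ valid across every $f\in F$ and every accumulated partial sum at once, which is the content of ``$D$ is a $J$-set.'' Thus the proof really rests on the structural fact that members of minimal idempotents are $J$-sets; once that is available, the self-improvement of $C^{\star}$, the finiteness of the bookkeeping, the shift trick controlling $\min H$, and the commutative rearrangement of the sums are all routine.
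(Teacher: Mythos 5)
The paper gives no proof of this statement at all --- it is quoted directly from \cite[Theorem 14.8.4]{key-7} --- and your argument is, in substance, exactly the proof given in that cited source: fix a minimal idempotent $p$ with $C\in p$, pass to $C^{\star}=\{x\in C:-x+C\in p\}$, and induct on $\left|F\right|$, extracting the pair $(a,H)$ from the $J$-set property of the finite intersection $C^{\star}\cap\bigcap_{s}(-s+C^{\star})\in p$ and re-indexing a shifted family to force $\min H(F)$ beyond every previously chosen $\max H(G)$. Your proof is correct; the only deviations are cosmetic --- you justify that members of $p$ are $J$-sets via $K(\beta S)\subseteq J(S)$ (legitimate, since the paper itself records that $J(S)$ is a two-sided ideal of $\beta S$) where the cited source argues via ``piecewise syndetic sets in commutative semigroups are $J$-sets,'' and you tacitly repair the statement's typos (the function $\alpha$ must map into $S$ rather than $\mathbb{N}$, and the function $H:\mathcal{P}_{f}\left(S^{\mathbb{N}}\right)\rightarrow\mathcal{P}_{f}\left(\mathbb{N}\right)$ must be introduced alongside $\alpha$).
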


{\Large{}Various stronger non-commutative version of the above central
set theorem can be found in \cite{key-2}.}{\Large\par}

{\Large{}There is an important set, which is intimately related to
central set theorem is known as $J$-set. In commutative semigroup
it is defined as,}{\Large\par}
\begin{defn}
{\Large{}\label{definition 1.10} \cite{key-7} Let $(S,+)$ be a
commutative semigroup and let $A\subseteq S$ is said to be a $J$-set
iff whenever $F\in\mathcal{P}_{f}\left(S^{\mathbb{N}}\right)$, there
exist $a\in S$ and $H\in\mathcal{P}_{f}(\mathbb{N})$ such that for
each $f\in F,$ $a+\sum_{t\in H}f(t)\in A$.}{\Large\par}
\end{defn}

{\Large{}In non-commutative case the situation is little different.
Here the analogous notion of $J$-sets are defined as,}{\Large\par}
\begin{defn}
{\Large{}\label{definition 1.11}\cite[Definition 14.14.1, page 342]{key-7}
Let $\left(S,\cdot\right)$ is a semigroup.}{\Large\par}
\begin{enumerate}
\item {\Large{}$\mathcal{T}=S^{\mathbb{N}}$}{\Large\par}
\item {\Large{}For $m\in\mathbb{N}$, $\mathcal{J}_{m}=$$\left\{ \begin{array}{cc}
\left(t\left(1\right),t\left(2\right),\ldots,t\left(m\right)\right)\in\mathbb{N}^{m} & :\\
t\left(1\right)<t\left(2\right)<\ldots<t\left(m\right)
\end{array}\right\} $}{\Large\par}
\item {\Large{}Given $m\in\mathbb{N}$, $a\in S^{m+1}$, $t\in\mathcal{J}_{m}$
and $f\in\mathcal{T}$, 
\[
x\left(m,a,t,f\right)=\left(\prod_{j=1}^{m}\left(a\left(j\right)\cdot f\left(t\left(j\right)\right)\right)\right)\cdot a\left(m+1\right)
\]
}{\Large\par}
\item {\Large{}$A\subseteq S$ is called a $J-set$ iff for each $F\in\mathcal{P}_{f}\left(\mathcal{T}\right)$,
there exists $m\in\mathbb{N}$, $a\in S^{m+1}$, $t\in\mathcal{J}_{m}$
such that, for each $f\in\mathcal{T}$,}\\
{\Large{}
\[
x\left(m,a,t,f\right)\in A.
\]
}{\Large\par}
\end{enumerate}
{\Large{}It can be shown that the set $J\left(S\right)=\left\{ p\in\beta S:for\,all\,A\in p,A\,is\,a\,J-set\right\} $
is a compact two sided ideal of $\left(\beta S,\cdot\right)$ and
hence from \cite[Theorem 2.5, page 40]{key-7} there exists idempotents
in $J\left(S\right)$. The members of the idempotens are called $C$-sets.
For details one can see \cite[chapter 14]{key-7}.}{\Large\par}

{\Large{}In \cite{key-6}, it was shown that product of two $J$-sets
and $C$-sets are $J$-set and $C$-set. We will give a combinatorial
proof of these results without using of algebra of Stone-\v{C}ech
compactification. To give a combinatorial proof, we need the following
combinatorial characterization of $C$set in terms of $J$-sets,}{\Large\par}
\end{defn}

\begin{thm}
{\Large{}\label{Theorem 1.12} \cite[Theorem 14.27, page 358]{key-7}
For a countable semigroup $(S,\cdot)$, $A\subseteq S$ is a $C$-set
iff there is a decreasing sequence $\langle C_{n}\rangle_{n=1}^{\infty}$
of subsets of $A$ such that,}{\Large\par}

{\Large{}$(1)$ \label{property 12.1} for each $n\in\mathbb{N}$
and each $x\in C_{n}$, there exists $m\in\mathbb{N}$ with $C_{m}\subseteq x^{-1}C_{n}$
and}{\Large\par}

{\Large{}$(2)$ \label{property 12.2} $C_{n}$ is a $J$set $\forall n\in\mathbb{N}$.}{\Large\par}
\end{thm}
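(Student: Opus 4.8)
The plan is to prove the two implications of this characterization separately, in both directions exploiting the algebraic structure of $\beta S$ through the ideal $J\left(S\right)$. I will use two features of $J\left(S\right)$: that it is a compact two-sided ideal of $\left(\beta S,\cdot\right)$ (recorded above), and the standard fact from the general theory that a set $B\subseteq S$ is a $J$-set if and only if $\overline{B}\cap J\left(S\right)\neq\emptyset$ (the partition regularity of $J$-sets). For an idempotent $p$ and $B\in p$ I will write $B^{\star}=\left\{ x\in B:x^{-1}B\in p\right\} $, and I will invoke the usual idempotent lemma that $B^{\star}\in p$ and that $x^{-1}\left(B^{\star}\right)\in p$ for every $x\in B^{\star}$; in particular $\left(B^{\star}\right)^{\star}=B^{\star}$.

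For sufficiency, suppose a decreasing sequence $\langle C_{n}\rangle_{n=1}^{\infty}$ of subsets of $A$ with properties $(1)$ and $(2)$ is given, and set $E=\bigcap_{n=1}^{\infty}\overline{C_{n}}$. Since each $C_{n}$ is a $J$-set, $\overline{C_{n}}\cap J\left(S\right)\neq\emptyset$, and as $\langle\overline{C_{n}}\cap J\left(S\right)\rangle_{n=1}^{\infty}$ is a nested family of nonempty closed subsets of the compact space $\beta S$, the intersection $E\cap J\left(S\right)$ is nonempty by the finite intersection property. I claim $E\cap J\left(S\right)$ is a subsemigroup: if $p,q\in E\cap J\left(S\right)$ then $p\cdot q\in J\left(S\right)$ because $J\left(S\right)$ is an ideal, and to see $p\cdot q\in E$ I fix $n$ and observe that for each $x\in C_{n}$ property $(1)$ supplies $m$ with $C_{m}\subseteq x^{-1}C_{n}$, whence $x^{-1}C_{n}\in q$ because $C_{m}\in q$. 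Thus $C_{n}\subseteq\left\{ x\in S:x^{-1}C_{n}\in q\right\} $, and since $C_{n}\in p$ this gives $C_{n}\in p\cdot q$; as $n$ was arbitrary, $p\cdot q\in E$. Being a closed, hence compact, right topological semigroup, $E\cap J\left(S\right)$ contains an idempotent $p$ by \cite[Theorem 2.5, page 40]{key-7}. Then $p\in J\left(S\right)$ and $C_{1}\in p$, so $A\in p$ because $C_{1}\subseteq A$, and therefore $A$ is a $C$-set.

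For necessity, suppose $A$ is a $C$-set and fix an idempotent $p\in J\left(S\right)$ with $A\in p$. I will build the sequence by recursion together with a bookkeeping of the required containments. Enumerate $S=\left\{ s_{k}:k\in\mathbb{N}\right\} $ and set $C_{1}=A^{\star}$, so that $C_{1}\subseteq A$, $C_{1}\in p$, and $C_{1}=C_{1}^{\star}$. Given $C_{1},\ldots,C_{n}$, each in $p$ and equal to its own star, let $P_{n}=\left\{ \left(j,x\right):1\leq j\leq n,\ x\in C_{j}\cap\left\{ s_{1},\ldots,s_{n}\right\} \right\} $, a finite set; since $x^{-1}C_{j}\in p$ for each such pair (as $C_{j}=C_{j}^{\star}$), the set $D_{n+1}=C_{n}\cap\bigcap_{\left(j,x\right)\in P_{n}}x^{-1}C_{j}$ lies in $p$, and I put $C_{n+1}=D_{n+1}^{\star}$. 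Then $\langle C_{n}\rangle$ is decreasing, each $C_{n}\in p$ (hence is a $J$-set, giving $(2)$), and $C_{n+1}\subseteq x^{-1}C_{j}$ for every $\left(j,x\right)\in P_{n}$. To verify $(1)$, take any $n$ and $x\in C_{n}$, say $x=s_{k}$; for any $N\geq\max\left(n,k\right)$ the pair $\left(n,x\right)$ belongs to $P_{N}$, so $C_{N+1}\subseteq x^{-1}C_{n}$ and $m=N+1$ works.

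The substantive inputs are the two structural facts about $J\left(S\right)$ borrowed from the general theory, namely that it is an ideal and that the $J$-sets are exactly the sets meeting $J\left(S\right)$ in their closure; granting these, sufficiency is the cleaner half. The delicate point is necessity. Note first that property $(1)$ forces $x^{-1}C_{n}\in p$ for every $x\in C_{n}$, since any $C_{m}\subseteq x^{-1}C_{n}$ already lies in $p$; hence each $C_{n}$ is compelled to satisfy $C_{n}=C_{n}^{\star}$, which is precisely why the star operation and its stability $\left(B^{\star}\right)^{\star}=B^{\star}$ are indispensable to the construction. The countable bookkeeping is then needed to upgrade the mere membership $x^{-1}C_{n}\in p$ to the genuine containment demanded by $(1)$, ensuring that for each individual pair $\left(n,x\right)$ some later term of the sequence actually lands inside $x^{-1}C_{n}$.
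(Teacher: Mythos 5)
The paper does not prove this statement at all: it is quoted verbatim from Hindman--Strauss \cite[Theorem 14.27]{key-7}, so there is no internal proof to compare against. Your argument is correct and is essentially the proof given in that cited source: necessity via the star-set recursion $C_{n+1}=D_{n+1}^{\star}$ over a countable enumeration of $S$, and sufficiency by exhibiting $\bigcap_{n=1}^{\infty}\overline{C_{n}}\cap J(S)$ as a nonempty compact subsemigroup (nonempty by compactness, closed under the product by property $(1)$) and applying the idempotent theorem \cite[Theorem 2.5]{key-7}. The only inputs beyond what the paper itself quotes are the partition regularity of $J$-sets (equivalently, that $B$ is a $J$-set iff $\overline{B}\cap J(S)\neq\emptyset$) and the idempotent lemma giving $B^{\star}\in p$ and $x^{-1}(B^{\star})\in p$ for $x\in B^{\star}$; you flag both explicitly, and neither is circular, since both are established in \cite{key-7} independently of Theorem 14.27.
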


{\Large{}For various other notion of large sets and their property
one can see \cite{key-4} and \cite{key-8}.}{\Large\par}

\textbf{\Large{}Orientation of the paper:}{\Large{} In section 2,
we will prove the product of quasi-central and central sets are quasi-central
and central. In section 3, we first prove that product of two $J$-sets
and $C$-sets are $J$-set and $C$-set.}{\Large\par}

\section{\textbf{\Large{}product of central sets }}
\begin{lem}
{\Large{}\label{Lemma 2.13} Let $\left(S_{1},\cdot\right)$ and $\left(S_{2},\cdot\right)$
be two semigroups. Let, $A\subseteq S_{1}$ and $B\subseteq S_{2}$
are two syndetic sets then $A\times B$ is syndetic in $S_{1}\times S_{2}$.}{\Large\par}
\end{lem}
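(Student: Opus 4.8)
The plan is to witness the syndeticity of $A\times B$ by the product of the two witnessing sets. Since $A$ is syndetic in $S_1$, I would fix a finite nonempty $F_1\subseteq S_1$ with $\bigcup_{t\in F_1}t^{-1}A=S_1$, and since $B$ is syndetic in $S_2$, fix a finite nonempty $F_2\subseteq S_2$ with $\bigcup_{s\in F_2}s^{-1}B=S_2$. The natural candidate for the witnessing set is then $F=F_1\times F_2$, which is finite and nonempty in $S_1\times S_2$, and I would claim that $\bigcup_{(t,s)\in F}(t,s)^{-1}(A\times B)=S_1\times S_2$.

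The key computational step is to identify the preimage sets under the coordinatewise operation. Writing the operation on $S_1\times S_2$ as $(t,s)\cdot(x,y)=(t\cdot x,s\cdot y)$, one sees that $(t,s)\cdot(x,y)\in A\times B$ holds exactly when $t\cdot x\in A$ and $s\cdot y\in B$, so that $(t,s)^{-1}(A\times B)=(t^{-1}A)\times(s^{-1}B)$. This reduces the whole problem to a purely set-theoretic identity about unions of rectangles.

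Finally I would take the union over all $(t,s)\in F_1\times F_2$ and invoke the distributivity of the Cartesian product over unions, namely that $\bigcup_{t\in F_1}\bigcup_{s\in F_2}(t^{-1}A)\times(s^{-1}B)=\bigl(\bigcup_{t\in F_1}t^{-1}A\bigr)\times\bigl(\bigcup_{s\in F_2}s^{-1}B\bigr)$, whose right-hand side equals $S_1\times S_2$ by the choice of $F_1$ and $F_2$. Combined with the preimage identity of the previous step, this shows $\bigcup_{(t,s)\in F}(t,s)^{-1}(A\times B)=S_1\times S_2$ and completes the verification.

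The argument is essentially immediate, so there is no genuine obstacle; the only place demanding slight care is the last step, where one must remember that a union of rectangles indexed over an independently varying pair $(t,s)$ equals the product of the two separate unions, rather than mistakenly collecting only a ``diagonal'' union. Once that identity is noted, the lemma follows directly from the definition of syndeticity in the product semigroup.
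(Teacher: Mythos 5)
Your proof is correct and follows exactly the approach of the paper: witness syndeticity of $A\times B$ by $F_1\times F_2$ and verify $\bigcup_{(t,s)\in F_1\times F_2}(t,s)^{-1}(A\times B)=S_1\times S_2$. The paper leaves that verification to the reader, whereas you supply the two details it omits, namely the identity $(t,s)^{-1}(A\times B)=(t^{-1}A)\times(s^{-1}B)$ and the fact that a union of rectangles over the full index product equals the product of the unions.
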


\begin{proof}
{\Large{}Let $F_{1}$ and $F_{2}$ be two finite subsets of $S_{1}$
and $S_{2}$ such that $\bigcup_{t\in F_{1}}t^{-1}A=S_{1}$ and $\bigcup_{t\in F_{2}}t^{-1}A=S_{2}$.
Now, it would be easy to verify that $\bigcup_{\left(t,s\right)\in F_{1}\times F_{2}}\left(t,s\right)^{-1}\left(A\times B\right)=S_{1}\times S_{2}$,
showing the syndeticity of $A\times B$. So we left the rest of the
verification to the reader.}{\Large\par}
\end{proof}
\begin{lem}
{\Large{}\label{Lemma 2.14} Let $\left(S_{1},\cdot\right)$ and $\left(S_{2},\cdot\right)$
be two semigroups. Let $A\subseteq S_{1}$ and $B\subseteq S_{2}$
are two thick sets then $A\times B$ is thick in $S_{1}\times S_{2}$.}{\Large\par}
\end{lem}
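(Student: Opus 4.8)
The plan is to verify the definition of (right) thickness directly, exploiting the fact that multiplication in $S_1 \times S_2$ is performed coordinatewise, i.e.\ $(a,b)\cdot(x,y) = (a\cdot x, b\cdot y)$. So I would fix an arbitrary finite nonempty set $E \subseteq S_1 \times S_2$ and aim to produce a single element $(x,y) \in S_1 \times S_2$ with $E \cdot (x,y) \subseteq A \times B$.

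First I would pass to the two coordinate projections $E_1 = \{a \in S_1 : (a,b) \in E \text{ for some } b \in S_2\}$ and $E_2 = \{b \in S_2 : (a,b) \in E \text{ for some } a \in S_1\}$. These are finite and nonempty precisely because $E$ is. Since $A$ is thick in $S_1$, applying the definition to the finite set $E_1$ yields an element $x \in S_1$ with $E_1 \cdot x \subseteq A$; symmetrically, thickness of $B$ applied to $E_2$ produces a $y \in S_2$ with $E_2 \cdot y \subseteq B$.

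It then remains to check that $(x,y)$ is the sought shift. For any $(a,b) \in E$ we have $a \in E_1$ and $b \in E_2$, hence $a\cdot x \in A$ and $b\cdot y \in B$, so that $(a,b)\cdot(x,y) = (a\cdot x, b\cdot y) \in A \times B$. Thus $E \cdot (x,y) \subseteq A \times B$, and since $E$ was arbitrary this establishes the thickness of $A \times B$.

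I do not anticipate any genuine obstacle here. The only point deserving attention is that one must shift by the projections $E_1$ and $E_2$ separately rather than trying to treat the pairs of $E$ simultaneously inside a single semigroup; but because the operation splits across the two coordinates, the two one-dimensional thickness hypotheses combine with no interaction. This parallels exactly the argument already used for the syndetic case in Lemma \ref{Lemma 2.13}, with the role of the covering family $F_1 \times F_2$ there played by the single element $(x,y)$ here.
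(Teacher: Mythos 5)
Your proof is correct and follows essentially the same route as the paper: project the finite set $E$ onto the two coordinates, apply thickness of $A$ and $B$ to those projections to get $x$ and $y$, and observe that $E\cdot(x,y)\subseteq A\times B$. The only difference is that you spell out the verification the paper leaves implicit (and your version avoids the paper's small typo where $\pi_1(F)$ appears twice in place of $\pi_2(F)$).
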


\begin{proof}
{\Large{}Take any finite set $F$ in $S_{1}\times S_{2}$ and take
the two sets $\pi_{1}(F)$ and $\pi_{2}(F)$, the projection of $F$
in $S_{1}$ and $S_{2}$ respectively. Now as $A$ and $B$ are thick
sets, there exists $x\in S_{1}$and $y\in S_{2}$ such that $\pi_{1}(F)\cdot x\subset A$
and $\pi_{1}(F)\cdot y\subset B$ and so, $F\cdot(x,y)\subset A\times B$.}{\Large\par}
\end{proof}
\begin{thm}
{\Large{}\label{Theorem 2.15} Let $\left(S_{1},\cdot\right)$ and
$\left(S_{2},\cdot\right)$ be two semigroups. Let $A\subseteq S_{1}$
and $B\subseteq S_{2}$ are two piecewise syndetic sets then $A\times B$
is piecewise syndetic in $S_{1}\times S_{2}$.}{\Large\par}
\end{thm}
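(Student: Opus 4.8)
The plan is to give a direct combinatorial proof, using only the definition of piecewise syndeticity together with Lemma~\ref{Lemma 2.14} (product of thick sets is thick). Since $A$ is piecewise syndetic in $S_1$, fix a finite nonempty set $F_1\subseteq S_1$ such that $\bigcup_{t\in F_1}t^{-1}A$ is thick in $S_1$; similarly fix a finite nonempty $F_2\subseteq S_2$ such that $\bigcup_{s\in F_2}s^{-1}B$ is thick in $S_2$. The natural candidate witnessing the piecewise syndeticity of $A\times B$ is the finite nonempty set $F_1\times F_2\subseteq S_1\times S_2$, and the goal is to show that $\bigcup_{(t,s)\in F_1\times F_2}(t,s)^{-1}(A\times B)$ is thick in $S_1\times S_2$.

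First I would compute the preimage for a single coordinate pair. Using that the product operation is coordinatewise, $(t,s)\cdot(x,y)=(t\cdot x,\,s\cdot y)$, one checks directly that
\[
(t,s)^{-1}(A\times B)=\{(x,y): t\cdot x\in A \text{ and } s\cdot y\in B\}=(t^{-1}A)\times(s^{-1}B).
\]
The key step is then the observation that a union of such ``rectangles'' indexed by a Cartesian product of index sets factors as a product of unions:
\[
\bigcup_{(t,s)\in F_1\times F_2}(t^{-1}A)\times(s^{-1}B)=\Big(\bigcup_{t\in F_1}t^{-1}A\Big)\times\Big(\bigcup_{s\in F_2}s^{-1}B\Big).
\]
This identity holds precisely because the index set is the full product $F_1\times F_2$, so that the two coordinates $t$ and $s$ may be chosen independently; this ``rectangular'' structure is the only place where a little care is needed, and it is the main (albeit mild) obstacle in the argument. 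Combining the two displays, $\bigcup_{(t,s)\in F_1\times F_2}(t,s)^{-1}(A\times B)$ equals the product of two thick sets, which is thick by Lemma~\ref{Lemma 2.14}. Hence $A\times B$ is piecewise syndetic in $S_1\times S_2$, as required.

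An alternative route, more parallel to Lemmas~\ref{Lemma 2.13} and~\ref{Lemma 2.14}, would be to invoke the characterization recorded in the introduction that a piecewise syndetic set is an intersection of a thick set and a syndetic set: writing $A=T_1\cap U_1$ and $B=T_2\cap U_2$ with $T_i$ thick and $U_i$ syndetic, one has the set-theoretic identity $A\times B=(T_1\times T_2)\cap(U_1\times U_2)$, after which Lemma~\ref{Lemma 2.14} handles the thick factor and Lemma~\ref{Lemma 2.13} the syndetic factor. This approach, however, additionally requires the converse direction of that characterization (that an intersection of a thick and a syndetic set is again piecewise syndetic), so I would favour the self-contained direct argument above. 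In either case the whole proof reduces to the elementary identity $(t,s)^{-1}(A\times B)=(t^{-1}A)\times(s^{-1}B)$ and the distribution of union over a Cartesian product, and I expect no serious difficulty beyond verifying these set equalities carefully.
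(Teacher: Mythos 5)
Your proof is correct and takes essentially the same route as the paper's: both fix finite witnessing sets $F_1\subseteq S_1$, $F_2\subseteq S_2$ and apply Lemma~\ref{Lemma 2.14} to conclude that $\bigcup_{(t,s)\in F_1\times F_2}(t,s)^{-1}(A\times B)$ is thick. The only difference is that you explicitly verify the identities $(t,s)^{-1}(A\times B)=(t^{-1}A)\times(s^{-1}B)$ and $\bigcup_{(t,s)\in F_1\times F_2}(t^{-1}A)\times(s^{-1}B)=\bigl(\bigcup_{t\in F_1}t^{-1}A\bigr)\times\bigl(\bigcup_{s\in F_2}s^{-1}B\bigr)$, which the paper leaves implicit.
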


\begin{proof}
{\Large{}Let $F$ and $G$ be two finite subsets of $S_{1}$ and $S_{2}$
respectively such that, $\bigcup_{x\in F}x^{-1}A$ and $\bigcup_{y\in G}y^{-1}B$
are thick sets in $S_{1}$ and $S_{2}.$}{\Large\par}

{\Large{}So, from lemma \ref{Lemma 2.14} the product set, $\bigcup_{(x,y)\in F\times G}(x,y)^{-1}(A\times B)$
is thick set. So, $A\times B$ is piecewise syndetic in $S_{1}\times S_{2}$.}{\Large\par}
\end{proof}
\begin{thm}
{\Large{}\label{Theorem 2.16} Let $\left(S_{1},\cdot\right)$ and
$\left(S_{2},\cdot\right)$ be two semigroups. Let $A\subseteq S_{1}$
and $B\subseteq S_{2}$ are two quasi-central sets then $A\times B$
is quasi-central in $S_{1}\times S_{2}$.}{\Large\par}
\end{thm}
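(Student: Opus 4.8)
The plan is to invoke the combinatorial characterization of quasi-central sets from Theorem \ref{Theorem 1.2} in each factor and then take the coordinatewise product of the two witnessing sequences. Concretely, since $A$ is quasi-central in $S_1$, I would fix a decreasing sequence $\langle C_n\rangle_{n=1}^\infty$ of subsets of $A$ satisfying conditions (\ref{2.1}) and (\ref{2.2}); since $B$ is quasi-central in $S_2$, fix a decreasing sequence $\langle D_n\rangle_{n=1}^\infty$ of subsets of $B$ enjoying the same two properties. I would then set $E_n=C_n\times D_n$ and claim that $\langle E_n\rangle_{n=1}^\infty$ witnesses the quasi-centrality of $A\times B$. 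That $\langle E_n\rangle$ is decreasing and that each $E_n\subseteq A\times B$ follow immediately from the corresponding facts about $\langle C_n\rangle$ and $\langle D_n\rangle$.

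For condition (\ref{2.1}), take $n\in\mathbb{N}$ and $(x,y)\in E_n=C_n\times D_n$. Applying property (\ref{2.1}) in each coordinate yields indices $m_1,m_2$ with $C_{m_1}\subseteq x^{-1}C_n$ and $D_{m_2}\subseteq y^{-1}D_n$. Setting $m=\max\{m_1,m_2\}$ and using that both sequences are decreasing gives $C_m\subseteq x^{-1}C_n$ and $D_m\subseteq y^{-1}D_n$ simultaneously. The key identity I would record is the product rule for preimages,
\[
(x,y)^{-1}(C_n\times D_n)=(x^{-1}C_n)\times(y^{-1}D_n),
\]
which follows directly from the coordinatewise definition of the operation on $S_1\times S_2$. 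Combining these yields $E_m=C_m\times D_m\subseteq (x^{-1}C_n)\times(y^{-1}D_n)=(x,y)^{-1}E_n$, which is exactly condition (\ref{2.1}) for $\langle E_n\rangle$.

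For condition (\ref{2.2}), each $C_n$ is piecewise syndetic in $S_1$ and each $D_n$ is piecewise syndetic in $S_2$, so by Theorem \ref{Theorem 2.15} the product $E_n=C_n\times D_n$ is piecewise syndetic in $S_1\times S_2$. With both conditions verified, Theorem \ref{Theorem 1.2} then certifies that $A\times B$ is quasi-central, which completes the argument.

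I do not anticipate a genuine obstacle: the whole proof reduces to reusing the earlier product theorem for piecewise syndetic sets and exploiting monotonicity to synchronize the two witnessing indices. The only point requiring a moment's care is that synchronization step, namely choosing a single $m$ that works for both coordinates; this is handled by passing to the maximum of $m_1$ and $m_2$ and invoking that the sequences are decreasing, and without this monotonicity one could not collapse $m_1$ and $m_2$ into one index.
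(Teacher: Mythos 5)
Your proposal is correct and follows essentially the same route as the paper's own proof: both invoke Theorem \ref{Theorem 1.2} in each factor, form the coordinatewise product chain, verify piecewise syndeticity via Theorem \ref{Theorem 2.15}, and handle condition (\ref{2.1}) by taking $m=\max\{m_1,m_2\}$. If anything, your write-up is slightly more careful than the paper's, which leaves the preimage identity $(x,y)^{-1}(C_n\times D_n)=(x^{-1}C_n)\times(y^{-1}D_n)$ and the role of monotonicity implicit.
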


\begin{proof}
{\Large{}Consider the following two chains, with property \ref{2.1}
and \ref{2.2} guaranteed by theorem\ref{Theorem 1.2}.}{\Large\par}

{\Large{}
\[
A\supseteq A_{1}\supseteq A_{2}\supseteq\ldots\supseteq A_{n}\supseteq\ldots
\]
}{\Large\par}

{\Large{}
\[
B\supseteq B_{1}\supseteq B_{2}\supseteq\ldots\supseteq B_{n}\supseteq\ldots
\]
Now consider the following chain,
\[
A\times B\supseteq A_{1}\times B_{1}\supseteq A_{2}\times B_{2}\supseteq\ldots\supseteq A_{n}\times B_{n}\supseteq\ldots
\]
We claim that the above chain satisfies the conditions of theorem
\ref{Theorem 1.2}.}{\Large\par}

{\Large{}From theorem \ref{Theorem 2.15} each $A_{i}\times B_{i}$
is piecewise syndetic in $S_{1}\times S_{2}$ and so, property \ref{2.2}
of theorem \ref{Theorem 1.2} has been satisfied.}{\Large\par}

{\Large{}Now choose $n\in\mathbb{N}$ and $(x,y)\in A_{n}\times B_{n}$,
then we claim that, there exists $m\in\mathbb{N}$ such that,}{\Large\par}

{\Large{}
\[
(x,y)^{-1}(A_{n}\times B_{n})\supseteq A_{m}\times B_{m}.
\]
}{\Large\par}

{\Large{}Now, $x\in A_{n}$ implies there exist $m_{1}\in\mathbb{N}$
such that $x^{-1}A_{n}\supseteq A_{m_{1}}$ and $y\in B_{n}$ implies
there exist $m_{2}\in\mathbb{N}$ such that $x^{-1}B_{n}\supseteq B_{m_{2}}$.}{\Large\par}

{\Large{}Take, $m=\max\{m_{1},m_{2}\}$ and so, $x^{-1}A_{n}\supseteq A_{m}$,
$y^{-1}B_{n}\supseteq B_{m}$.}{\Large\par}

{\Large{}Hence, $(x,y)^{-1}(A_{n}\times B_{n})\supseteq A_{m}\times B_{m}$
implying the property \ref{2.2} of theorem \ref{Theorem 1.2}. }{\Large\par}

{\Large{}So, $A\times B$ is quasi-central in $S_{1}\times S_{2}$.}{\Large\par}
\end{proof}
\begin{thm}
{\Large{}\label{Theorem 2.17} Let $\left(S_{1},\cdot\right)$ and
$\left(S_{2},\cdot\right)$ be two commutative semigroups. Let $A\subseteq S_{1}$
and $B\subseteq S_{2}$ are two central sets then $A\times B$ is
central in $S_{1}\times S_{2}$.}{\Large\par}
\end{thm}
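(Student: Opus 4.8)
The plan is to mirror the proof of Theorem \ref{Theorem 2.16} almost verbatim, replacing the combinatorial characterization of quasi-central sets (Theorem \ref{Theorem 1.2}) by the characterization of central sets (Theorem \ref{Theorem 1.6}). First I would invoke Theorem \ref{Theorem 1.6} to obtain decreasing sequences $\langle A_n\rangle_{n=1}^\infty$ in $A$ and $\langle B_n\rangle_{n=1}^\infty$ in $B$, each satisfying the two defining properties: the shift-absorption condition \ref{6.1} and the collectionwise-piecewise-syndeticity condition \ref{6.2}. I would then propose the product chain $\langle A_n\times B_n\rangle_{n=1}^\infty$ inside $A\times B$ as the witnessing sequence and verify that it satisfies the hypotheses of Theorem \ref{Theorem 1.6} in $S_1\times S_2$.

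The shift-absorption property \ref{6.1} for the product chain is genuinely routine and goes exactly as in Theorem \ref{Theorem 2.16}: given $(x,y)\in A_n\times B_n$, pick $m_1,m_2$ with $A_{m_1}\subseteq x^{-1}A_n$ and $B_{m_2}\subseteq y^{-1}B_n$, set $m=\max\{m_1,m_2\}$, and use monotonicity of the chains to conclude $A_m\times B_m\subseteq (x,y)^{-1}(A_n\times B_n)$. This requires no commutativity and no new idea.

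The main obstacle is property \ref{6.2}: I must show that if $A_n$ is collectionwise piecewise syndetic in $S_1$ and $B_n$ is collectionwise piecewise syndetic in $S_2$, then $A_n\times B_n$ is collectionwise piecewise syndetic in $S_1\times S_2$. Unlike ordinary piecewise syndeticity (Theorem \ref{Theorem 2.15}), this is a statement about a single set being a member of a collectionwise-piecewise-syndetic family, so I would first have to make precise which family witnesses collectionwise piecewise syndeticity of the product; the natural candidate is a family built as products of the witnessing families for $A_n$ and $B_n$. Working from Definition \ref{definition 1.5}, suppose the family $\mathcal I_1\ni A_n$ has finite sets $K_C\subset S_1$ (for $C\in\mathcal I_1$) making $\{K_C^{-1}C\}$ collectionwise thick, and similarly $\mathcal I_2\ni B_n$ has finite sets $L_D\subset S_2$. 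I would take the product family $\{C\times D:C\in\mathcal I_1,\,D\in\mathcal I_2\}$ with witnessing finite sets $K_C\times L_D$, and the key computation is that $(K_C\times L_D)^{-1}(C\times D)=(K_C^{-1}C)\times(L_D^{-1}D)$, which holds because $(t,s)^{-1}(C\times D)=(t^{-1}C)\times(s^{-1}D)$ componentwise. Then collectionwise thickness of the product family reduces to showing that a finite intersection $\bigcap_i(P_i\times Q_i)=(\bigcap_iP_i)\times(\bigcap_iQ_i)$ is thick in $S_1\times S_2$ whenever each $\bigcap_iP_i$ is thick in $S_1$ and each $\bigcap_iQ_i$ is thick in $S_2$; this last step is precisely Lemma \ref{Lemma 2.14} applied to the two component intersections.

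I expect the hardest part to be bookkeeping rather than mathematics: aligning the two formally different definitions of collectionwise piecewise syndetic (Definitions \ref{definition 1.4} and \ref{definition 1.5}) and making sure the product witnessing family genuinely contains $A_n\times B_n$ and that finite subfamilies of the product family correspond to finite subfamilies of each factor. Once the product-of-families construction is set up and the componentwise inverse-image identity is recorded, property \ref{6.2} follows by combining it with Lemma \ref{Lemma 2.14}, and the proof closes exactly as Theorem \ref{Theorem 2.16} did. Commutativity of $S_1$ and $S_2$ is assumed in the statement but is needed only insofar as the characterization in Theorem \ref{Theorem 1.6} and the underlying notion of central set are being used; the product argument itself is purely formal.
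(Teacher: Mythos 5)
Your proposal is correct and follows essentially the same route as the paper's own proof: both verify the shift-absorption property exactly as in Theorem \ref{Theorem 2.16}, and both establish collectionwise piecewise syndeticity via Definition \ref{definition 1.5}, using the product witnessing sets $K_{A_i}\times K_{B_i}$, the componentwise identity $(t,s)^{-1}(C\times D)=(t^{-1}C)\times(s^{-1}D)$, and the thickness of products of thick sets (Lemma \ref{Lemma 2.14}). The only cosmetic difference is that the paper runs the argument directly on the diagonal family $\left\{ A_{i}\times B_{i}:i\in\mathbb{N}\right\}$, whereas you pass through the full product family $\left\{ C\times D:C\in\mathcal{I}_{1},D\in\mathcal{I}_{2}\right\}$ and restrict to it afterwards.
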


\begin{proof}
{\Large{}Consider the following two chains guaranteed by theorem \ref{Theorem 1.6},}{\Large\par}

{\Large{}
\[
A\supseteq A_{1}\supseteq A_{2}\supseteq\ldots\supseteq A_{n}\supseteq\ldots
\]
}{\Large\par}

{\Large{}
\[
B\supseteq B_{1}\supseteq B_{2}\supseteq\ldots\supseteq B_{n}\supseteq\ldots
\]
Then, from theorem \ref{Theorem 2.16}, the following chain satisfies
the properties of theorem \ref{Theorem 1.2}.
\[
A\times B\supseteq A_{1}\times B_{1}\supseteq A_{2}\times B_{2}\supseteq\ldots\supseteq A_{n}\times B_{n}\supseteq\ldots
\]
It remains to show that $\left\{ A_{i}\times B_{i}:i\in\mathbb{N}\right\} $
is collectionwise piecewise syndetic.}{\Large\par}

{\Large{}As, $\left\{ A_{i}:i\in\mathbb{N}\right\} $ and $\left\{ B_{i}:i\in\mathbb{N}\right\} $
are collectionwise piecewise syndetic in $S_{1}$ and $S_{2}$, we
have collection of finite sets $\left\{ K_{A_{i}}:i\in\mathbb{N}\right\} $
in $S_{1}$ and $\left\{ K_{B_{i}}:i\in\mathbb{N}\right\} $ in $S_{2}$
such that $\left\{ K_{A_{i}}^{-1}A_{i}:i\in\mathbb{N}\right\} $ and
$\left\{ K_{B_{i}}^{-1}B_{i}:i\in\mathbb{N}\right\} $ collectionwise
thick in $S_{1}$ and $S_{2}$.}{\Large\par}

{\Large{}We claim that, $\left\{ (K_{A_{i}}\times K_{B_{i}})^{-1}\left(A_{i}\times B_{i}\right):i\in\mathbb{N}\right\} $
is collectionwise thick in $S_{1}\times S_{2}$.}{\Large\par}

{\Large{}To prove the claim, take any finite set $F\subset\mathbb{N}$
and $F=\left\{ i_{1},i_{2},\ldots,i_{n}\right\} $ and consider the
set $\left\{ (K_{A_{i}}\times K_{B_{i}})^{-1}\left(A_{i}\times B_{i}\right):i\in F\right\} $.
We will show, 
\[
\bigcap_{i\in F}(K_{A_{i}}\times K_{B_{i}})^{-1}\left(A_{i}\times B_{i}\right)
\]
is thick.}{\Large\par}

{\Large{}Choose any finite set $\left\{ (a_{1},b_{1}),(a_{2},b_{2}),\ldots,(a_{n},b_{n})\right\} $.
Now, $\bigcap_{i\in F}K_{A_{i}}^{-1}A_{i}$ and $\bigcap_{i\in F}K_{B_{i}}^{-1}B_{i}$
are thick in $S_{1}$ and $S_{2}$ respectively.}{\Large\par}

{\Large{}So, there exists $x\in S_{1}$ and $y\in S_{2}$ such that
$\left\{ a_{1},a_{2},\ldots,a_{n}\right\} \cdot x\subset\bigcap_{i\in F}K_{A_{i}}^{-1}A_{i}$
and $\left\{ b_{1},b_{2},\ldots,b_{n}\right\} \cdot y\subset\bigcap_{i\in F}K_{B_{i}}^{-1}B_{i}$
and so, 
\[
\left\{ (a_{1},b_{1}),(a_{2},b_{2}),\ldots,(a_{n},b_{n})\right\} \cdot(x,y)\subset\bigcap_{i\in F}(K_{A_{i}}\times K_{B_{i}})^{-1}\left(A_{i}\times B_{i}\right)
\]
This proves collectionwise thickness of $\left\{ (K_{A_{i}}\times K_{B_{i}})^{-1}\left(A_{i}\times B_{i}\right):i\in\mathbb{N}\right\} $
and so, $\left\{ A_{i}\times B_{i}:i\in\mathbb{N}\right\} $ is collectionwise
piecewise syndetic.}{\Large\par}

{\Large{}This shows $A\times B$ is central in $S_{1}\times S_{2}$.}{\Large\par}
\end{proof}

\section{\textbf{\Large{}product of $C$-sets}}

{\Large{}In this section we first prove that, the product of two $J$-sets
are $J$-set and product of two }\textbf{\Large{}$C$-}{\Large{}sets}\textbf{\Large{}
}{\Large{}are }\textbf{\Large{}$C$}{\Large{}-set}\textbf{\Large{}
}{\Large{}in commutative semigroup and then it will be extended to
general semigroup. But to proceed further, we need the some basic
facts of partial semigroup.}{\Large\par}
\begin{defn}
{\Large{}\label{Defn 3.18} A partial semigroup is defined as a pair
$(S,\ast)$ where $\ast$ maps a subset of $S\times S$ to $S$ and
satisfies for all $a,b,c\in S$ , $(a\ast b)\ast c=a\ast(b\ast c)$
in the sense that if either side is defined, then so is the other
and they are equal.}{\Large\par}
\end{defn}

{\Large{}$\left(\mathcal{P}_{f}\left(\mathbb{N}\right),\uplus\right)$,
where $\mathcal{P}_{f}\left(\mathbb{N}\right)$ is the collection
of nonempty finite subsets of $\mathbb{N}$, is an example of partial
semigroup. Where $A\uplus B=A\cup B$ is defined iff $A\cap B=\emptyset$.}{\Large\par}

{\Large{}There are several notion of largeness of sets in partial
semigroup similar to semigroup arises from similar characterization
of the structure of Stone-\v{C}ech compactification of semigroup,
$\beta S$. The Stone-\v{C}ech compactification of a partial semigroup
contains an interesting subsemigroup, which gives us many important
notions of largeness of sets. }{\Large\par}
\begin{defn}
{\Large{}\label{Defn 3.19} Let $\left(S,\ast\right)$ be a partial
semigroup.}{\Large\par}

{\Large{}(a) For $s\in S$, $\varphi\left(s\right)=\left\{ t\in S:\:s\ast t\text{ is defined}\right\} $.}{\Large\par}

{\Large{}(b) For $H\in\mathcal{P}_{f}\left(S\right),\:\sigma\left(H\right)=\bigcap_{s\in H}\varphi\left(s\right)$.}{\Large\par}

{\Large{}(c) $\left(S,\ast\right)$ is adequate iff $\sigma\left(H\right)\neq\emptyset$
for all $H\in\mathcal{P}_{f}\left(S\right)$.}{\Large\par}

{\Large{}(d) $\delta S=\bigcap_{x\in S}\overline{\varphi\left(x\right)}=\bigcap_{H\in\mathcal{P}_{f}\left(S\right)}\overline{\sigma\left(H\right)}$.}{\Large\par}
\end{defn}

{\Large{}In fact, for a semigroup $S$, $\delta S=\beta S$ and adequacy
is needed in partial semigroup to guarantee that $\delta S\neq\emptyset$.
$(\delta S,+)$ is a compact right topological semigroup . In particular,
the combinatorially and algebraically defined large sets are sometimes
different for partial semigroups. Although, they coincide for semigroups.
In partial semigroup, each algebraically defined large sets have a
combinatorial version which is defined as using a notions precede
by \v{c}. For details on partial semigroup, one can see \cite{key-10}.}{\Large\par}
\begin{defn}
{\Large{}\label{Defn 3.20} Let $\left(S,\ast\right)$ be an adequate
partial semigroup and suppose $A\subseteq S$.}{\Large\par}

{\Large{}(a) $A$ is $IP$ if and only if there exists an idempotent
$p\in\delta S$ such that $A\in p$.}{\Large\par}

{\Large{}(b) $A$ is \v{c}-$ip$ (combinatorially $ip$) if and only
if there exists a sequence $\left\langle x_{n}\right\rangle _{n=1}^{\infty}$
in $S$ such that for all $F\in\mathcal{P}_{f}\left(\mathbb{N}\right)$,}\\
{\Large{} $\prod_{n\in F}x_{n}$ is defined and $\prod_{n\in F}x_{n}\in A$.}{\Large\par}
\end{defn}

{\Large{}A set is said to be $IP^{*}$ if it intersects every $IP$
set, i.e, it belongs to every idempotent $p\in\delta S$ and is said
to be \v{c}-$IP^{*}$ if the set intersects every \v{c}-$IP$ set.}{\Large\par}

\subsubsection{\textbf{\Large{}Commutative case:}}
\begin{thm}
{\Large{}\label{Theorem 3.21} Let $\left(S,\cdot\right)$ be a commutative
semigroup, $A\subseteq S$ is a $J$-set then, for any $F\in\mathcal{P}_{f}\left(S^{\mathbb{N}}\right)$
, 
\[
\left\{ H\in\mathcal{P}_{f}(\mathbb{N}):for\,some\,a\in S,\,a+\sum_{t\in H}f(t)\in A\,\forall f\in F\right\} 
\]
is \v{c}-ip$^{\star}$ in $\mathcal{P}_{f}(\mathbb{N})$.}{\Large\par}
\end{thm}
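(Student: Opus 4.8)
The plan is to unwind the definition of \v{c}-ip$^{\star}$ and reduce directly to the $J$-set hypothesis on $A$ by a block-reindexing argument. Recall that a set $\mathcal{B}\subseteq\mathcal{P}_{f}(\mathbb{N})$ is \v{c}-ip exactly when there is a sequence $\langle H_{n}\rangle_{n=1}^{\infty}$ of members of $\mathcal{P}_{f}(\mathbb{N})$, necessarily pairwise disjoint because in $\left(\mathcal{P}_{f}(\mathbb{N}),\uplus\right)$ a union $H_{i}\uplus H_{j}$ is defined only when $H_{i}\cap H_{j}=\emptyset$, all of whose finite unions $\bigcup_{n\in K}H_{n}$ lie in $\mathcal{B}$. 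Writing $\mathcal{H}$ for the set displayed in the statement, to show $\mathcal{H}$ is \v{c}-ip$^{\star}$ it therefore suffices to prove that for every such sequence $\langle H_{n}\rangle_{n=1}^{\infty}$ there is a finite nonempty $K\subseteq\mathbb{N}$ with $\bigcup_{n\in K}H_{n}\in\mathcal{H}$.

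First I would collapse each block to a single sequence value. Given a pairwise disjoint sequence $\langle H_{n}\rangle_{n=1}^{\infty}$ and the fixed family $F\in\mathcal{P}_{f}(S^{\mathbb{N}})$, for each $f\in F$ define a new sequence $g_{f}\in S^{\mathbb{N}}$ by $g_{f}(n)=\sum_{t\in H_{n}}f(t)$, and set $F'=\{g_{f}:f\in F\}$, so that $F'\in\mathcal{P}_{f}(S^{\mathbb{N}})$. Next I would apply the hypothesis that $A$ is a $J$-set to the family $F'$: there exist $a\in S$ and $K\in\mathcal{P}_{f}(\mathbb{N})$ such that $a+\sum_{n\in K}g_{f}(n)\in A$ for every $f\in F$. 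Using commutativity of $S$ and the pairwise disjointness of the blocks, one rewrites
\[
\sum_{n\in K}g_{f}(n)=\sum_{n\in K}\sum_{t\in H_{n}}f(t)=\sum_{t\in\bigcup_{n\in K}H_{n}}f(t),
\]
whence $a+\sum_{t\in\bigcup_{n\in K}H_{n}}f(t)\in A$ for every $f\in F$. By the definition of $\mathcal{H}$ this says precisely that $\bigcup_{n\in K}H_{n}\in\mathcal{H}$, and since this union is a member of the \v{c}-ip set generated by $\langle H_{n}\rangle_{n=1}^{\infty}$, we conclude that $\mathcal{H}$ meets that set. As the generating sequence was arbitrary, $\mathcal{H}$ is \v{c}-ip$^{\star}$.

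The argument is essentially a single application of the $J$-set definition after the right reparametrization, so I do not anticipate a serious obstacle. The one point requiring care is the bookkeeping in the rewriting above: collapsing each block $H_{n}$ to the scalar $g_{f}(n)$ must be justified, and this uses both commutativity, to regroup the sum freely, and the disjointness of the $H_{n}$, so that $\sum_{t\in\bigcup_{n\in K}H_{n}}$ splits as $\sum_{n\in K}\sum_{t\in H_{n}}$ with no index counted twice. I would also remark that the map $f\mapsto g_{f}$ need not be injective, but this is harmless, since a single pair $(a,K)$ works simultaneously for all of $F'$ and hence for all of $F$.
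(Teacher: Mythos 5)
Your proposal is correct and takes essentially the same route as the paper: both collapse each block $H_{n}$ of the given \v{c}-ip set to the single element $\sum_{t\in H_{n}}f(t)$, apply the $J$-set property of $A$ to the resulting family of sequences, and then recombine via disjointness and commutativity to exhibit $\bigcup_{n\in K}H_{n}$ in the displayed set. If anything, your write-up is more careful than the paper's about the bookkeeping it leaves implicit (pairwise disjointness rather than ordered blocks, and the harmlessness of $f\mapsto g_{f}$ failing to be injective).
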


\begin{proof}
{\Large{}Let us choose any \v{c}-ip set in $\mathcal{P}_{f}(\mathbb{N})$
arbitrarily and let,}\\
{\Large{} $\left\{ H_{1}<H_{2}<\cdots<H_{n}<\cdots\right\} $ be the
base of that \v{c}-ip set.}{\Large\par}

{\Large{}Consider the set $G\in\mathcal{P}_{f}\left(S^{\mathbb{N}}\right)$
as,}\\
{\Large{} 
\[
\left\{ \left(\prod_{t\in H_{1}}f(t),\prod_{t\in H_{2}}f(t),\cdots,\prod_{t\in H_{n}}f(t),\cdots\right):f\in F\right\} .
\]
}{\Large\par}

{\Large{}Then, as $A$ is $J$-set, there exists $a\in S$ and $K\in\mathcal{P}_{f}(\mathbb{N})$
such that, 
\[
\left\{ a\cdot\prod_{\bigcup_{j\in K}H_{j}}f(t):t\in\bigcup_{j\in K}H_{j}\,and\,f\in F\right\} \subset A
\]
and so, 
\[
\bigcup_{j\in K}H_{j}\in\left\{ H\in\mathcal{P}_{f}(\mathbb{N}):a\cdot\prod_{t\in H}f(t)\in A\,\forall f\in F\right\} 
\]
showing that $H$ is a \v{c}-ip$^{\star}$ in $\mathcal{P}_{f}(\mathbb{N})$.}{\Large\par}
\end{proof}
{\Large{}To proceed further, we need the following equivalencies of
Hindman's theorem \cite{key-9}:}{\Large\par}
\begin{thm}
{\Large{}\label{Theiorem 3.22} The following statements are equivalent: }{\Large\par}
\begin{enumerate}
\item {\Large{}Let $\left(S,\cdot\right)$ be a semigroup, let $r\in\mathbb{N}$,
and let $\langle x_{n}\rangle_{n=1}^{\infty}$ be a sequence in S.
Let, $FP\left(\langle x_{n}\rangle_{n=1}^{\infty}\right)=\bigcup_{i=1}^{r}A_{i}$,
there exists $i\in\left\{ 1,2,\ldots,r\right\} $ and a sequence $\langle y_{n}\rangle_{n=1}^{\infty}$
in $S$ such that $FP\left(\langle y_{n}\rangle_{n=1}^{\infty}\right)\subseteq A_{i}$.}{\Large\par}
\item {\Large{}Let $r\in\mathbb{N}$ and $\mathbb{N}=\bigcup_{i=1}^{r}A_{i}$,
then there exists $i\in\left\{ 1,2,\ldots,r\right\} $ and a sequence
$\langle x_{n}\rangle_{n=1}^{\infty}$ in $S$ such that $FP\left(\langle x_{n}\rangle_{n=1}^{\infty}\right)\subseteq A_{i}$.}{\Large\par}
\item {\Large{}Let $\mathcal{P}_{f}\left(\mathbb{N}\right)$ be the set
of all finite subsets of $\mathbb{N}$ and $r\in\mathbb{N}$. If $\mathcal{P}_{f}\left(\mathbb{N}\right)=\bigcup_{i=1}^{r}\mathscr{A}_{i}$,
then there exists $i\in\left\{ 1,2,\ldots,r\right\} $ and a sequence
$\langle\mathcal{F}_{n}\rangle_{n=1}^{\infty}$ such that $\min\mathcal{F}_{n+1}>\max\mathcal{F}_{n}$
for each $n\in\mathbb{N}$ and $\bigcup_{n\in G}\mathcal{F}_{n}\in\mathcal{\mathscr{A}}_{i}$,
whenever $G\in\mathcal{F}.$}{\Large\par}
\item {\Large{}Let, $r\in\mathbb{N}$ and $\langle\mathcal{F}_{n}\rangle_{n=1}^{\infty}\subseteq\mathcal{P}_{f}\left(\mathbb{N}\right)$
be a sequence such that $\min\mathcal{F}_{n+1}>\max\mathcal{F}_{n}$
for each $n\in\mathbb{N}$ and $\bigcup_{n\in G}\mathcal{F}_{n}\in\bigcup_{i=1}^{r}\mathcal{\mathscr{A}}_{i}$,
then there exists $i\in\left\{ 1,2,\ldots,r\right\} $ and a sequence
$\langle\mathcal{G}_{n}\rangle_{n=1}^{\infty}$ such that $\min\mathcal{G}_{n+1}>\max\mathcal{G}_{n}$
for each $n\in\mathbb{N}$ and $\bigcup_{n\in H}\mathcal{G}_{n}\in\mathcal{\mathscr{A}}_{i}$,
whenever $H\in\mathcal{F}.$}{\Large\par}
\end{enumerate}
\end{thm}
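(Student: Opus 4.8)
The plan is to prove that the four statements are logically equivalent by closing the cycle $(1)\Rightarrow(2)\Rightarrow(3)\Rightarrow(1)$ and separately establishing $(3)\Leftrightarrow(4)$; since these are \emph{equivalencies}, no implication will use the truth of Hindman's theorem, only one of the statements as a hypothesis from which to derive another. Two mechanisms carry the whole argument. The first is a binary encoding that converts finite sums in $(\mathbb{N},+)$ into disjoint finite unions in $(\mathcal{P}_f(\mathbb{N}),\uplus)$. The second is a relativization that reads a coloring of a combinatorial subsystem as a coloring pulled back through a structure-preserving map, so that the blanket-coloring statements $(2),(3)$ specialize to the sub-sequence statements $(1),(4)$.

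I would first dispose of the two translation arrows $(1)\Rightarrow(2)$ and $(3)\Rightarrow(1)$. For $(1)\Rightarrow(2)$, apply $(1)$ to $(\mathbb{N},+)$ with $x_n=2^{n-1}$: because every positive integer is a unique sum of distinct powers of two, $FS(\langle x_n\rangle)=\mathbb{N}$, so a partition of $\mathbb{N}$ \emph{is} a partition of this $FS$-set and the monochromatic sub-$FS$-set supplied by $(1)$ is exactly the conclusion of $(2)$. For $(3)\Rightarrow(1)$, given $FP(\langle x_n\rangle)=\bigcup_{i=1}^{r}A_i$ in a semigroup $S$, use the map $\Phi\colon\mathcal{P}_f(\mathbb{N})\to S$ with $\Phi(F)=\prod_{n\in F}x_n$, the product taken in increasing index order, which is onto $FP(\langle x_n\rangle)$; color $G$ by the cell $A_i$ containing $\Phi(G)$, feed this into $(3)$ to obtain a block sequence $\langle\mathcal{F}_k\rangle$ with all unions $\bigcup_{k\in H}\mathcal{F}_k$ of one color, and observe that, because $\langle\mathcal{F}_k\rangle$ is a \emph{block} sequence, $\Phi\bigl(\bigcup_{k\in H}\mathcal{F}_k\bigr)=\prod_{k\in H}\Phi(\mathcal{F}_k)$, whence $FP(\langle\Phi(\mathcal{F}_k)\rangle)\subseteq A_i$, giving $(1)$.

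The core of the proof is $(2)\Rightarrow(3)$. Define $\phi\colon\mathcal{P}_f(\mathbb{N})\to\mathbb{N}$ by $\phi(F)=\sum_{n\in F}2^{n-1}$; this is a bijection with $\phi(F\uplus G)=\phi(F)+\phi(G)$ whenever $F\cap G=\emptyset$. Transport a coloring $\mathcal{P}_f(\mathbb{N})=\bigcup_{i=1}^{r}\mathscr{A}_i$ through $\phi$ and apply $(2)$ to get $\langle x_n\rangle$ with $FS(\langle x_n\rangle)$ monochromatic. Here is the main obstacle: to read finite sums back as finite \emph{unions} I need the preimages $\phi^{-1}(\cdot)$ to be pairwise disjoint and increasing, that is $\max<\min$, so that $\phi$ turns sums into honest disjoint unions with no binary carrying, and an arbitrary sequence from $(2)$ need not have its binary supports arranged in blocks. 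I would remove this obstacle by extracting from $FS(\langle x_n\rangle)$ a new sequence $\langle y_k\rangle$ of consecutive block-sums with increasing supports: having built $y_1,\dots,y_k$ with $\max\mathrm{supp}(y_1+\cdots+y_k)=M$, among $2^{M+1}+1$ consecutive partial sums of a tail of $\langle x_n\rangle$ two agree modulo $2^{M+1}$, and their difference is a nonempty consecutive block-sum $y_{k+1}$ divisible by $2^{M+1}$, hence with $\min\mathrm{supp}(y_{k+1})>M$. Then $\langle\mathcal{F}_k\rangle$ with $\mathcal{F}_k=\phi^{-1}(y_k)$ is an increasing block sequence, each $\sum_{k\in H}y_k$ still lies in $FS(\langle x_n\rangle)$ because the index blocks are disjoint, and $\phi^{-1}$ carries it to $\bigcup_{k\in H}\mathcal{F}_k$ in the correct cell, which is exactly $(3)$.

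It remains to treat $(3)\Leftrightarrow(4)$, both relativizations in $(\mathcal{P}_f(\mathbb{N}),\uplus)$. The implication $(4)\Rightarrow(3)$ is immediate on taking $\mathcal{F}_n=\{n\}$, for then $\bigcup_{n\in G}\mathcal{F}_n=G$ and $(4)$ literally becomes $(3)$. For $(3)\Rightarrow(4)$, a given block sequence $\langle\mathcal{F}_n\rangle$ induces the injective, union-preserving map $\psi(G)=\bigcup_{n\in G}\mathcal{F}_n$; coloring $G$ by the cell containing $\psi(G)$ and applying $(3)$ yields a block sequence $\langle\mathcal{H}_k\rangle$ with monochromatic unions, whence $\mathcal{G}_k=\psi(\mathcal{H}_k)$ is again a block sequence, since the $\mathcal{F}_n$ are ordered blocks, and realizes $(4)$. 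I expect the block-extraction inside $(2)\Rightarrow(3)$ to be the only genuinely delicate step; everywhere else the noncommutativity of $S$ is handled simply by fixing the increasing-index convention in $FP$ and in $x(m,a,t,f)$, which is precisely what makes $\Phi$ and $\psi$ respect the operation and is the reason a \emph{block} structure, rather than mere disjointness, is essential.
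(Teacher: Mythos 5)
Your proof is correct, but it takes a genuinely different route from the paper, because the paper does not actually prove this theorem: its entire proof is a citation, deferring the equivalence of (1), (2), (3) to \cite{key-1} and that of (2) and (4) to \cite[Section 2.2]{key11}. You give a self-contained argument instead, and your decomposition also differs: you close the cycle $(1)\Rightarrow(2)\Rightarrow(3)\Rightarrow(1)$ and attach (4) to (3), whereas the cited sources link (4) to (2). Your individual steps all check out: taking $x_n=2^{n-1}$ gives $(1)\Rightarrow(2)$; the increasing-order product map $\Phi$ gives $(3)\Rightarrow(1)$, the block structure of the $\mathcal{F}_k$ being exactly what makes $\Phi$ turn unions into ordered products in a possibly noncommutative $S$; the carry-free block extraction in $(2)\Rightarrow(3)$ (pigeonhole on residues modulo $2^{M+1}$ among $2^{M+1}+1$ consecutive partial sums of a fresh tail) is the classical derivation of the Finite Unions Theorem from Hindman's theorem and correctly defuses the one delicate point, binary carrying; and the pullback along the union-preserving injection $\psi(G)=\bigcup_{n\in G}\mathcal{F}_n$ gives $(3)\Leftrightarrow(4)$. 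As to what each approach buys: the paper's citation is economical, but your version is elementary and self-contained (nothing beyond pigeonhole and transport of colorings), which actually fits the paper's declared aim of replacing algebraic arguments by combinatorial ones better than the paper's own treatment does; moreover, bridging (4) with (3) is arguably more natural than the cited $(2)\Leftrightarrow(4)$, since both statements live in $\left(\mathcal{P}_f(\mathbb{N}),\uplus\right)$ and the bridge is just the finite-union analogue of the map you already used for $(3)\Rightarrow(1)$. Two conventions left implicit in your write-up should be stated: pulled-back colorings need a tie-breaking rule (say, least admissible index $i$) to be well-defined, and the tail from which $y_{k+1}$ is extracted must begin beyond the indices already consumed by $y_1,\ldots,y_k$; both are harmless and standard.
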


\begin{proof}
{\Large{}The equivallence of $\left(1\right)$,$\left(2\right)$ and
$\left(3\right)$ can be found in \cite{key-1} and the equivallence
of $\left(2\right)$ and $\left(4\right)$ can be found in \cite[section 2.2, page 44]{key11}.}{\Large\par}
\end{proof}
{\Large{}The combinatorial proof of the following lemma uses the above
theorem which is an important,}{\Large\par}
\begin{lem}
{\Large{}\label{Lemma 3.23} In $\left(\mathcal{P}_{f}\left(\mathbb{N}\right),\uplus\right)$,
Two \v{c}-ip$^{\star}$ sets always intersect.}{\Large\par}
\end{lem}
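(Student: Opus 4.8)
The plan is to deduce the lemma from a single structural fact: in $(\mathcal{P}_f(\mathbb{N}),\uplus)$ every \v{c}-$ip^{\star}$ set contains a \v{c}-$ip$ set. Granting this, the conclusion is immediate. If $A_1$ and $A_2$ are \v{c}-$ip^{\star}$, I would apply the fact to $A_1$ to obtain a \v{c}-$ip$ set $B$ with $B\subseteq A_1$. Since $A_2$ is \v{c}-$ip^{\star}$, by definition it meets the \v{c}-$ip$ set $B$, and any element of $A_2\cap B$ lies in $A_1\cap A_2$; hence $A_1\cap A_2\neq\emptyset$.

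The heart of the argument is therefore to show that a \v{c}-$ip^{\star}$ set $A\subseteq\mathcal{P}_f(\mathbb{N})$ contains a \v{c}-$ip$ set, and here I would invoke the finite-unions form of Hindman's theorem recorded in Theorem \ref{Theiorem 3.22}(3). First I would partition $\mathcal{P}_f(\mathbb{N})$ into the two cells $\mathscr{A}_1=A$ and $\mathscr{A}_2=\mathcal{P}_f(\mathbb{N})\setminus A$. By Theorem \ref{Theiorem 3.22}(3) there is a sequence $\langle\mathcal{G}_n\rangle_{n=1}^{\infty}$ with $\max\mathcal{G}_n<\min\mathcal{G}_{n+1}$ for every $n$ such that all finite unions $\bigcup_{n\in G}\mathcal{G}_n$ (for $G\in\mathcal{P}_f(\mathbb{N})$) land in a single cell $\mathscr{A}_i$. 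The condition $\max\mathcal{G}_n<\min\mathcal{G}_{n+1}$ makes the blocks $\mathcal{G}_n$ pairwise disjoint, so $\biguplus_{n\in G}\mathcal{G}_n$ is defined in $(\mathcal{P}_f(\mathbb{N}),\uplus)$ for every $G$, and hence $\left\{\biguplus_{n\in G}\mathcal{G}_n:G\in\mathcal{P}_f(\mathbb{N})\right\}$ is precisely the \v{c}-$ip$ set generated by $\langle\mathcal{G}_n\rangle$ in the sense of Definition \ref{Defn 3.20}(b).

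To finish, I would note that because $A$ is \v{c}-$ip^{\star}$ it must intersect this \v{c}-$ip$ set; since the set is monochromatic, this forces the cell to be $\mathscr{A}_1=A$ rather than its complement. Consequently the entire set $\left\{\biguplus_{n\in G}\mathcal{G}_n:G\in\mathcal{P}_f(\mathbb{N})\right\}$ is contained in $A$, which is the structural fact wanted, and combining it with the first paragraph closes the proof.

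I expect no genuine obstacle beyond identifying the correct Ramsey-theoretic input; the only point requiring care is the partial-semigroup bookkeeping in the middle step, namely verifying that the block sequence $\langle\mathcal{G}_n\rangle$ really does generate a \v{c}-$ip$ set, which holds because the blocks being pairwise disjoint guarantees that every relevant $\uplus$-product is defined. Everything else is a direct unwinding of the definitions of \v{c}-$ip$ and \v{c}-$ip^{\star}$ together with Theorem \ref{Theiorem 3.22}(3).
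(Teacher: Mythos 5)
Your proof is correct and is essentially the paper's own argument: both run Hindman's finite-unions theorem on the two-cell partition determined by $A$ to force an entire \v{c}-$ip$ set inside $A$ (it cannot lie in the complement, since $A$ is \v{c}-$ip^{\star}$), and then finish by letting the second \v{c}-$ip^{\star}$ set meet that \v{c}-$ip$ set. The only difference is cosmetic: you partition all of $\mathcal{P}_{f}\left(\mathbb{N}\right)$ and invoke Theorem \ref{Theiorem 3.22}(3), whereas the paper partitions an arbitrary \v{c}-$ip$ set $FU\left(\langle\mathcal{F}_{n}\rangle_{n=1}^{\infty}\right)$ and invokes the refinement form \ref{Theiorem 3.22}(4) -- if anything, your variant is marginally cleaner, since it sidesteps the fact that a general \v{c}-$ip$ set need not be presented by a block-ordered generating sequence.
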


\begin{proof}
{\Large{}Let $A$ and $B$ be two \v{c}-ip$^{\star}$ sets. Let us
take an \v{c}-ip set $FU\left(\langle\mathcal{F}_{n}\rangle_{n=1}^{\infty}\right)$
($FU$ stands for finite union). Now consider the partition $FU\left(\langle\mathcal{F}_{n}\rangle_{n=1}^{\infty}\right)=A\bigcup A'$
($A'$ is the complement of $A$), then one of $A$ or $A'$ will
contain a subsequence $\langle\mathcal{G}_{n}\rangle_{n=1}^{\infty}$
of $\langle\mathcal{F}_{n}\rangle_{n=1}^{\infty}$ such that $FU\left(\langle\mathcal{G}_{n}\rangle_{n=1}^{\infty}\right)$
is entirely contained in that set. As, $A$ is \v{c}-ip$^{\star}$,
$FU\left(\langle\mathcal{G}_{n}\rangle_{n=1}^{\infty}\right)$ can't
be contained in $A'$ and so $FU\left(\langle\mathcal{G}_{n}\rangle_{n=1}^{\infty}\right)$$\subseteq A$
and as $B$ is \v{c}-ip$^{\star}$, $B$ will intersect $FU\left(\langle\mathcal{G}_{n}\rangle_{n=1}^{\infty}\right)$
and so, $A\bigcap B\neq\emptyset$.}{\Large\par}
\end{proof}
\begin{thm}
{\Large{}\label{Theorem 3.24} Let $\left(S_{1},\cdot\right)$ and
$\left(S_{2},\cdot\right)$ be two commutative semigroups, $A\subseteq S_{1}$
and $B\subseteq S_{2}$ are two $J$-sets then $A\times B$ is $J$-set
in $S_{1}\times S_{2}$.}{\Large\par}
\end{thm}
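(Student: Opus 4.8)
The plan is to reduce this two-dimensional statement to two one-dimensional applications of the characterization in Theorem \ref{Theorem 3.21}, glued together by the intersection property of Lemma \ref{Lemma 3.23}. The central difficulty is that the definition of a $J$-set demands a \emph{single} common index set $H\in\mathcal{P}_f(\mathbb{N})$ that works simultaneously for both coordinates; applying the $J$-set property to $A$ and to $B$ separately would only produce two possibly different index sets, with no a priori reason for them to coincide.

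First I would unpack the data. Given $\mathcal{F}\in\mathcal{P}_f\big((S_1\times S_2)^{\mathbb{N}}\big)$, write each $\phi\in\mathcal{F}$ as $\phi(t)=(\phi_1(t),\phi_2(t))$ and set $F_1=\{\phi_1:\phi\in\mathcal{F}\}\subseteq S_1^{\mathbb{N}}$ and $F_2=\{\phi_2:\phi\in\mathcal{F}\}\subseteq S_2^{\mathbb{N}}$; both are finite. Since the product in $S_1\times S_2$ is computed coordinatewise, $\prod_{t\in H}\phi(t)=\big(\prod_{t\in H}\phi_1(t),\prod_{t\in H}\phi_2(t)\big)$, so it suffices to produce one $H\in\mathcal{P}_f(\mathbb{N})$ together with $a\in S_1$ and $b\in S_2$ such that $a\cdot\prod_{t\in H}\phi_1(t)\in A$ and $b\cdot\prod_{t\in H}\phi_2(t)\in B$ for every $\phi\in\mathcal{F}$.

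Next I would invoke Theorem \ref{Theorem 3.21} twice. Since $A$ is a $J$-set in $S_1$, the set
\[
\mathcal{H}_A=\Big\{H\in\mathcal{P}_f(\mathbb{N}):\text{for some }a\in S_1,\ a\cdot\textstyle\prod_{t\in H}f(t)\in A\ \forall f\in F_1\Big\}
\]
is \v{c}-ip$^{\star}$ in $\mathcal{P}_f(\mathbb{N})$, and likewise the analogous set $\mathcal{H}_B$ built from $B$ and $F_2$ is \v{c}-ip$^{\star}$. By Lemma \ref{Lemma 3.23} any two \v{c}-ip$^{\star}$ sets meet, so I may choose $H\in\mathcal{H}_A\cap\mathcal{H}_B$. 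Membership in $\mathcal{H}_A$ supplies an $a\in S_1$ and membership in $\mathcal{H}_B$ supplies a $b\in S_2$, each valid for this single common $H$; note that the quantifier ``for some $a$'' in the set $\mathcal{H}_A$ already forces one $a$ to serve all $f\in F_1$ at once, which is exactly what is needed.

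Finally I would assemble the witness. With $(a,b)\in S_1\times S_2$ and the common $H$ just chosen, the coordinatewise computation gives $(a,b)\cdot\prod_{t\in H}\phi(t)=\big(a\cdot\prod_{t\in H}\phi_1(t),\ b\cdot\prod_{t\in H}\phi_2(t)\big)\in A\times B$ for every $\phi\in\mathcal{F}$. As $\mathcal{F}$ was an arbitrary finite subset of $(S_1\times S_2)^{\mathbb{N}}$, this exhibits $A\times B$ as a $J$-set. The only substantive steps are the two invocations of Theorem \ref{Theorem 3.21} and the appeal to Lemma \ref{Lemma 3.23}; everything else is bookkeeping about coordinatewise products, and the ``same $H$'' obstacle is resolved precisely by the intersection lemma.
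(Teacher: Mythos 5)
Your proof is correct and follows essentially the same route as the paper: decompose the family of sequences coordinatewise into $F_1$ and $F_2$, apply Theorem \ref{Theorem 3.21} to each factor to obtain two \v{c}-ip$^{\star}$ subsets of $\mathcal{P}_f(\mathbb{N})$, intersect them via Lemma \ref{Lemma 3.23} to get a single common $H$, and assemble the witness $(a,b)$. Your write-up is in fact a bit more careful than the paper's about the quantifier structure (one $a$ serving all of $F_1$ simultaneously) and about typographical slips, but the underlying argument is identical.
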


\begin{proof}
{\Large{}Consider any $F\in\mathcal{P}_{f}\left(\left(S_{1}\times S_{2}\right)^{\mathbb{N}}\right)$,
and then}\\
{\Large{} $F=\left\{ (f^{i},g^{i}):1\leq i\leq k\,for\,some\,k\right\} $
where,}\\
{\Large{} $f^{i}\in\mathcal{P}_{f}\left(S_{1}^{\mathbb{N}}\right),g^{i}\in\mathcal{P}_{f}\left(S_{2}^{\mathbb{N}}\right)$
for all $1\leq i\leq k$.}{\Large\par}

{\Large{}Now take $F_{1}=\left\{ f^{i}:1\leq i\leq k\right\} $ and
$F_{2}=\left\{ g^{i}:1\leq i\leq k\right\} $. }\\
{\Large{}As, $A\subseteq S_{1}$ and $B\subseteq S_{2}$ are $J-sets$,
there exists two \v{c}-ip$^{\star}$ sets $K_{1}$ and $K_{2}$ in
$\mathcal{P}_{f}(\mathbb{N})$ guaranteed by theorem \ref{Theorem 3.21}.
and so from lemma \ref{Lemma 3.23} $\exists H\in K_{1}\cap K_{2}$.}{\Large\par}

{\Large{}From theorem \ref{Theorem 3.21}, there exists $a\in S_{1}$
and $b\in S_{2}$ such that $a\cdot\prod_{t\in H}f^{i}(t)\in A$ and
$a\cdot\prod_{t\in H}g^{i}(t)\in A$ for all $f^{i}\in F_{1}$ and
$g^{i}\in F_{2}$.}{\Large\par}

{\Large{}Hence, $\left(a,b\right)\cdot\prod_{t\in H}(f^{i},g^{i})(t)\in A\times B$,
proving it is a $J-set$.}{\Large\par}
\end{proof}
\begin{thm}
{\Large{}\label{Theorem 3.25} Let $\left(S_{1},\cdot\right)$ and
$\left(S_{2},\cdot\right)$ be two commutative semigroups, $A\subseteq S_{1}$
and $B\subseteq S_{2}$ are two $C$-sets then $A\times B$ is $C$-set
in $S_{1}\times S_{2}$.}{\Large\par}
\end{thm}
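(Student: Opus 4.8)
The plan is to imitate the proofs of Theorems \ref{Theorem 2.16} and \ref{Theorem 2.17}, replacing the role played there by piecewise syndeticity (respectively collectionwise piecewise syndeticity) with the $J$-set property, and invoking the combinatorial characterization of $C$-sets from Theorem \ref{Theorem 1.12}. First I would apply Theorem \ref{Theorem 1.12} to $A$ and to $B$ to obtain decreasing sequences $\langle A_n\rangle_{n=1}^{\infty}$ of subsets of $A$ and $\langle B_n\rangle_{n=1}^{\infty}$ of subsets of $B$, each satisfying conditions $(1)$ and $(2)$ of that theorem. I then form the product chain
\[
A\times B\supseteq A_1\times B_1\supseteq A_2\times B_2\supseteq\cdots\supseteq A_n\times B_n\supseteq\cdots,
\]
which is a decreasing sequence of subsets of $A\times B$, and I claim it witnesses that $A\times B$ is a $C$-set.

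For condition $(1)$, fix $n\in\mathbb{N}$ and $(x,y)\in A_n\times B_n$. Since $x\in A_n$ there is $m_1$ with $A_{m_1}\subseteq x^{-1}A_n$, and since $y\in B_n$ there is $m_2$ with $B_{m_2}\subseteq y^{-1}B_n$. Setting $m=\max\{m_1,m_2\}$, and using that $(x,y)^{-1}(A_n\times B_n)=x^{-1}A_n\times y^{-1}B_n$ together with the nestedness of the two chains, gives $A_m\times B_m\subseteq(x,y)^{-1}(A_n\times B_n)$. This is verbatim the argument already carried out in Theorem \ref{Theorem 2.16}, so it requires no new idea.

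For condition $(2)$, I must show that each $A_n\times B_n$ is a $J$-set in $S_1\times S_2$. But $A_n$ is a $J$-set in $S_1$ and $B_n$ is a $J$-set in $S_2$ by condition $(2)$ of Theorem \ref{Theorem 1.12} applied to the two original chains; since $S_1$ and $S_2$ are commutative, Theorem \ref{Theorem 3.24} applies and immediately yields that $A_n\times B_n$ is a $J$-set. Having verified both conditions, Theorem \ref{Theorem 1.12} tells us that $A\times B$ is a $C$-set.

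The real content, and hence the only genuine obstacle, is already discharged before we reach this statement: it lives in Theorem \ref{Theorem 3.24} (product of $J$-sets is a $J$-set), which in turn rests on Theorem \ref{Theorem 3.21} and the intersection lemma for \v{c}-ip$^{\star}$ sets (Lemma \ref{Lemma 3.23}). Given those tools, the present theorem is purely a bookkeeping assembly: the $C$-set characterization reduces the claim to two features, a shift-absorption tower condition and the $J$-set property of each level, the first of which is formally identical to the quasi-central computation and the second of which is exactly the already-proved product theorem for $J$-sets.
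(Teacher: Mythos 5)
Your proposal is correct and follows exactly the paper's own route: apply Theorem \ref{Theorem 1.12} to obtain the two chains, form the product chain $\langle A_n\times B_n\rangle_{n=1}^{\infty}$, verify the shift condition $(1)$ by the same $m=\max\{m_1,m_2\}$ argument as in Theorem \ref{Theorem 2.16}, and get condition $(2)$ from Theorem \ref{Theorem 3.24}. The only difference is that you write out the details the paper leaves to the reader.
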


\begin{proof}
{\Large{}As, from theorem \ref{Theorem 1.12} $A$ and $B$ has chain
of $J$-sets,}{\Large\par}

{\Large{}
\[
A\supseteq A_{1}\supseteq A_{2}\supseteq\ldots\supseteq A_{n}\supseteq\ldots
\]
}{\Large\par}

{\Large{}
\[
B\supseteq B_{1}\supseteq B_{2}\supseteq\ldots\supseteq B_{n}\supseteq\ldots
\]
where each chain satisfies the conditions \ref{property 12.1} and
\ref{property 12.2}. }{\Large\par}

{\Large{}As, each $A_{i}\times B_{i}$ is $J$-set in $S_{1}\times S_{2}$
and 
\[
A\times B\supseteq A_{1}\times B_{1}\supseteq A_{2}\times B_{2}\supseteq\ldots\supseteq A_{n}\times B_{n}\supseteq\ldots
\]
and the rest of the proof is similar to the proof of theorem \ref{Theorem 2.16}
and so we leave it to the reader.}{\Large\par}
\end{proof}
{\Large{}Now we will give the proofs of theorem \ref{Theorem 3.24}
and \ref{Theorem 3.25} for non-commutative semigroup.}{\Large\par}

\subsubsection{\textbf{\Large{}Non-commutative case:}}
\begin{lem}
{\Large{}\label{Lemma 3.26} Let $\left(S,\cdot\right)$ be a semigroup
and $A\subseteq S$ is a J-set. Then for any $F\in\mathcal{P}_{f}$$\left(S^{\mathbb{N}}\right)$
and any IP-set $FS\left(\left\langle x_{n}\right\rangle _{n=1}^{\infty}\right)\subseteq\mathbb{N}$,
there exist $m\in\mathbb{N}$ and $\left(x_{i_{1}},x_{i_{2}},\ldots,x_{i_{m}}\right)\in\mathbb{N}^{m},$
$\left(a_{1},a_{2},\ldots,a_{m+1}\right)\in S^{m+1}$ such that 
\[
\left\{ a_{1}f\left(x_{i_{1}}\right)a_{2}f\left(x_{i_{2}}\right)\ldots a_{m}f\left(x_{i_{m}}\right)a_{m+1}:f\in F\right\} \subseteq A.
\]
}{\Large\par}
\end{lem}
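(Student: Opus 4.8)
The plan is to deduce the statement from the bare $J$-set definition (Definition~\ref{definition 1.11}) after reindexing the positions, so that the positions the definition hands back are automatically drawn from the generating sequence of the prescribed IP-set.

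First I would record a harmless reduction: we may assume the generating sequence $\langle x_n\rangle_{n=1}^{\infty}$ is strictly increasing, since replacing it by a subsequence never enlarges $FS(\langle x_n\rangle_{n=1}^{\infty})$ while every term of the subsequence still lies in the original IP-set. Strict monotonicity is only for convenience: the product $a_1 f(x_{i_1})\cdots a_m f(x_{i_m})a_{m+1}$ is formed in the order fixed by the indices $i_1<\cdots<i_m$, so the values $x_{i_j}$ need not themselves increase for it to be well defined.

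The main step is the reindexing. For each $f\in F$ I would define a new sequence $g_f\in S^{\mathbb{N}}$ by $g_f(n)=f(x_n)$ and set $G=\{g_f:f\in F\}\in\mathcal{P}_f(S^{\mathbb{N}})$. Since $A$ is a $J$-set, Definition~\ref{definition 1.11} applied to the finite family $G$ produces $m\in\mathbb{N}$, a tuple $(a_1,\dots,a_{m+1})\in S^{m+1}$ and $t\in\mathcal{J}_m$ (so $t(1)<\cdots<t(m)$) with
\[
a_1\,g_f(t(1))\,a_2\,g_f(t(2))\cdots a_m\,g_f(t(m))\,a_{m+1}\in A\quad\text{for every }f\in F .
\]
Substituting $g_f(t(j))=f(x_{t(j)})$ and writing $i_j:=t(j)$, so that $i_1<\cdots<i_m$, rewrites this as $a_1 f(x_{i_1})\cdots a_m f(x_{i_m})a_{m+1}\in A$ for all $f\in F$; and each $x_{i_j}$, being a term of $\langle x_n\rangle$, is a one-term element of $FS(\langle x_n\rangle_{n=1}^{\infty})$. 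This is exactly the asserted configuration.

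I do not expect a deep obstacle: the whole content is the choice of reindexing $g_f(n)=f(x_n)$, which transports the unrestricted positions allowed by the raw $J$-set definition onto positions supplied by the IP-generating sequence. The only care needed is bookkeeping — that $G$ is a genuine finite member of $\mathcal{P}_f(S^{\mathbb{N}})$, that the factors are multiplied in the order dictated by the increasing tuple $t\in\mathcal{J}_m$, and that terms of $\langle x_n\rangle$ really do belong to $FS(\langle x_n\rangle_{n=1}^{\infty})$. Should one instead insist that the positions be genuine finite sums $\sum_{n\in H_j}x_n$, the same argument goes through verbatim after relabelling $\langle x_n\rangle$ by the increasing sequence of block sums $y_j=\sum_{n\in H_j}x_n$ with $H_1<H_2<\cdots$ and using $g_f(j)=f(y_j)$.
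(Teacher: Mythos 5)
Your proposal is correct and takes essentially the same route as the paper: the paper's proof also forms the reindexed family $G=\left\{ \left(f\left(x_{1}\right),f\left(x_{2}\right),\ldots,f\left(x_{n}\right),\ldots\right):f\in F\right\}$ and invokes Definition \ref{definition 1.11} directly, which is exactly your substitution $g_{f}(n)=f(x_{n})$ with $i_{j}=t(j)$. Your write-up simply makes explicit the bookkeeping (finiteness of $G$, increasing tuple $t\in\mathcal{J}_{m}$, terms of the sequence lying in the IP-set) that the paper leaves to the reader.
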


\begin{proof}
{\Large{}Let, $G\in\mathcal{P}_{f}\left(S^{\mathbb{N}}\right)$ and
$G=\left\{ \left(f\left(x_{1}\right),f\left(x_{2}\right),\ldots,f\left(x_{n}\right),\ldots\right):f\in F\right\} $.
Then from definition \ref{definition 1.11} of J-set, the required
result follows.}{\Large\par}
\end{proof}
\begin{thm}
{\Large{}\label{Theorem 3.27} Let $\left(S_{1},\cdot\right)$ and
$\left(S_{2},\cdot\right)$ be two semigroup and $A\subseteq S_{1}$
and $B\subseteq S_{2}$ be two $J$-sets. Then $A\times B$ is a $J$-set
in $S_{1}\times S_{2}$.}{\Large\par}
\end{thm}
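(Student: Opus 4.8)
The plan is to follow the template of the commutative Theorem~\ref{Theorem 3.24}, replacing its use of block-merging (Theorem~\ref{Theorem 3.21}) by the confinement-to-an-IP-set provided by Lemma~\ref{Lemma 3.26}. First I would unpack the data exactly as in Theorem~\ref{Theorem 3.24}: given $F\in\mathcal{P}_{f}\big((S_{1}\times S_{2})^{\mathbb{N}}\big)$, write $F=\{(f^{l},g^{l}):1\le l\le k\}$ and set $F_{1}=\{f^{l}:1\le l\le k\}\in\mathcal{P}_{f}(S_{1}^{\mathbb{N}})$ and $F_{2}=\{g^{l}:1\le l\le k\}\in\mathcal{P}_{f}(S_{2}^{\mathbb{N}})$. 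Unwinding Definition~\ref{definition 1.11} for the product semigroup, what must be produced is a single $m\in\mathbb{N}$, a single $t\in\mathcal{J}_{m}$, and tuples $a\in S_{1}^{m+1}$, $b\in S_{2}^{m+1}$ such that $\prod_{j=1}^{m}\big(a(j)f^{l}(t(j))\big)a(m+1)\in A$ and $\prod_{j=1}^{m}\big(b(j)g^{l}(t(j))\big)b(m+1)\in B$ for every $l$; then $c(j)=(a(j),b(j))$ witnesses $x(m,c,t,(f^{l},g^{l}))\in A\times B$. Thus the whole problem reduces to producing one common length $m$ and one common increasing position tuple $t$ that simultaneously admits an interleaved witness into $A$ for $F_{1}$ and into $B$ for $F_{2}$; the interleaving constants $a,b$ are allowed to differ and cause no trouble.

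Second, I would try to establish that the families of admissible position tuples are large enough to be forced to meet. For the J-set $A$ and the family $F_{1}$, let $\mathcal{D}_{A}$ be the collection of finite $H\subseteq\mathbb{N}$ that, read in increasing order, serve as the position set of some standard interleaved $J$-witness for $A$ against $F_{1}$; define $\mathcal{D}_{B}$ analogously. The goal is $\mathcal{D}_{A}\cap\mathcal{D}_{B}\neq\emptyset$, after which the previous paragraph finishes the proof. The device for this is Lemma~\ref{Lemma 3.26}: sampling the functions of $F_{1}$ along the generators of a prescribed IP-set $FS(\langle x_{n}\rangle)$ shows that $\mathcal{D}_{A}$ contains an admissible $H$ drawn from $\{x_{n}:n\in\mathbb{N}\}$, i.e. $\mathcal{D}_{A}$ meets every IP-set of $(\mathbb{N},+)$ from below; the same holds for $\mathcal{D}_{B}$. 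The plan is then to upgrade this to a genuine intersection by the method of Lemma~\ref{Lemma 3.23}: fix a common ground IP-set, invoke the equivalent forms of Hindman's theorem collected in Theorem~\ref{Theiorem 3.22} to pass to a sub-system on which one side is captured, and let the \v{c}-ip$^{\star}$-type largeness of the other side force a common admissible position set $H=t$.

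The hard part, and the reason the non-commutative case is not a verbatim copy of Theorem~\ref{Theorem 3.24}, is precisely this forcing of a common position tuple. In the commutative proof of Theorem~\ref{Theorem 3.21} an admissible set is witnessed by a single shift $a$ followed by the unstructured sum $\sum_{t\in H}f(t)$, so one may freely merge whole blocks of a \v{c}-ip set: $\bigcup_{j\in K}H_{j}$ is again admissible because $\sum_{t\in\bigcup_{j}H_{j}}f(t)=\sum_{j}\sum_{t\in H_{j}}f(t)$. In the non-commutative setting the witness $a(1)f(t(1))a(2)\cdots a(m)f(t(m))a(m+1)$ carries an interleaved constant before every single position, so merging the elements inside a block would require an identity element between consecutive $f$-values, which a general semigroup need not possess. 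This is exactly the gap Lemma~\ref{Lemma 3.26} is designed to bridge, since it produces admissible positions that are individual generators of a prescribed IP-set, keeping the standard interleaved form intact rather than fusing blocks. I therefore expect the main technical effort to lie in showing that these ``meets-every-IP-set-from-below'' families are still rich enough to be forced to intersect, i.e. in pushing Lemma~\ref{Lemma 3.26} through an argument of the shape of Lemma~\ref{Lemma 3.23}; once a single common pair $(m,t)$ is secured, assembling $c(j)=(a(j),b(j))$ and concluding that $A\times B$ is a $J$-set is immediate and parallels Theorem~\ref{Theorem 3.24}.
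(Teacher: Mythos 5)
Your first paragraph is exactly right, and it is the same reduction the paper makes: since the interleaving tuples $a$ and $b$ may be chosen independently in the two coordinates, the entire content of the theorem is the existence of one length $m$ and one increasing tuple $t$ that is admissible simultaneously for $(A,F_{1})$ and for $(B,F_{2})$. Your diagnosis of the non-commutative obstruction (no identity element to insert between consecutive $f$-values, so the block-merging of Theorem \ref{Theorem 3.21} is unavailable) is also correct, and Lemma \ref{Lemma 3.26} is indeed what the paper substitutes for it. But your proposal stops at exactly the decisive point: you never prove that $\mathcal{D}_{A}$ and $\mathcal{D}_{B}$ intersect; you only announce that this should follow by ``pushing Lemma \ref{Lemma 3.26} through an argument of the shape of Lemma \ref{Lemma 3.23}.'' That step is not a deferred verification --- it \emph{is} the theorem. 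The paper's proof consists precisely of carrying it out: it partitions a ground IP-set $FS\left(\left\langle x_{n}\right\rangle _{n=1}^{\infty}\right)$ according to admissibility for $A$, extracts a captured sub-IP-set $FS\left(\left\langle y_{n}\right\rangle _{n=1}^{\infty}\right)$ via Hindman's theorem (Theorem \ref{Theiorem 3.22}), and then repeats the capture inside it for $B$ to get $FS\left(\left\langle z_{n}\right\rangle _{n=1}^{\infty}\right)$ whose generator sets are supposed to serve both sides at once; your write-up contains no substitute for this double-capture argument.

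The gap is genuine and not merely expository, because the largeness property you actually extract from Lemma \ref{Lemma 3.26} --- ``$\mathcal{D}_{A}$ contains a finite subset of the generator set of every IP-set'' --- is too weak to force an intersection. Let $\mathcal{D}_{1}$ be all finite subsets of $\mathbb{N}$ of even cardinality and $\mathcal{D}_{2}$ all those of odd cardinality: each contains a finite subset of every infinite set, hence of the generator set of every IP-set, yet $\mathcal{D}_{1}\cap\mathcal{D}_{2}=\emptyset$. So no argument using only that property can succeed; what Lemma \ref{Lemma 3.23} really consumes is the \v{c}-ip$^{\star}$ property of $\mathcal{D}_{A}$ and $\mathcal{D}_{B}$ in $\left(\mathcal{P}_{f}\left(\mathbb{N}\right),\uplus\right)$, i.e.\ that some finite \emph{union} of any prescribed block sequence is admissible, and that is exactly what commutativity bought in Theorem \ref{Theorem 3.21} and what your own identity-element observation shows to be unavailable here. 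Concretely, if you try to run the capture argument, Hindman's theorem returns a sub-IP-set whose generators are sums $w_{n}=\sum_{t\in G_{n}}x_{t}$, and Lemma \ref{Lemma 3.26} applied to that sub-system certifies admissibility of position sets consisting of the $w_{n}$ themselves, whereas membership in the partition cells is a statement about position sets drawn from the $x_{t}$; since $f\left(w_{n}\right)$ bears no relation to the values $f\left(x_{t}\right)$, $t\in G_{n}$, and admissibility is not closed under such regrouping, the step ``the bad cell cannot contain an IP-set'' does not go through. Until that mismatch is bridged --- for instance by proving that a $J$-set admits an infinite set of positions \emph{all} of whose finite subsets are admissible --- what you have is a correct reduction plus an unproved key claim, and the tools you cite do not suffice to prove it.
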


\begin{proof}
{\Large{}Let $A$ be a J-set and $F\in\mathcal{P}_{f}\left(S^{\mathbb{N}}\right)$.
Take any IP-set $FS\left(\left\langle x_{n}\right\rangle _{n=1}^{\infty}\right)$.
Now take the partition of $FS\left(\left\langle x_{n}\right\rangle _{n=1}^{\infty}\right)$
as,}{\Large\par}

{\Large{}$FS\left(\left\langle x_{n}\right\rangle _{n=1}^{\infty}\right)=B\cup C$,
where $\sum_{t\in G}x_{t}\in B$ if and only if there exists $a=\left(a_{1},a_{2},\ldots,a_{m+1}\right)\in S^{m+1}$
such that 
\[
\left\{ a_{1}f\left(x_{1}\right)a_{2}f\left(x_{2}\right)\ldots a_{m}f\left(x_{m}\right)a_{m+1}:f\in F\right\} \subseteq A
\]
otherwise $\sum_{t\in G}x_{t}\in C$ . Clearly one of $B$ or $C$
contain an $IP$-set from Hindman's Theorem \ref{Theiorem 3.22} but
$C$ can't contain this from Lemma \ref{Lemma 3.26} and construction
of $C$. So, $B$ will contain an IP-set $FS\left(\left\langle y_{n}\right\rangle _{n=1}^{\infty}\right)\subseteq FS\left(\left\langle x_{n}\right\rangle _{n=1}^{\infty}\right)$. }{\Large\par}

{\Large{}Now, let $G\in\mathcal{P}_{f}\left(\left(S\times S\right)^{\mathbb{N}}\right)$
and so,
\[
G=\left\{ \left(f_{1},g_{1}\right),\left(f_{2},g_{2}\right),\ldots,\left(f_{n},g_{n}\right):f_{i},g_{i}\in\mathcal{P}_{f}\left(S^{\mathbb{N}}\right)\,\forall i\in\left\{ 1,2,\ldots,n\right\} \right\} 
\]
 and therefore there exists an $IP$-set $FS\left(\left\langle y_{n}\right\rangle _{n=1}^{\infty}\right)$
such that for any $H\in\mathcal{P}_{f}\left(\mathbb{N}\right)$, $\left(y_{i_{1}},y_{i_{2}},\ldots,y_{i_{\left|H\right|}}\right)$,
there exists $a=\left(a_{1},\ldots,a_{\left|H\right|+1}\right)$ such
that 
\[
\left\{ a_{1}f\left(y_{i_{1}}\right)a_{2}f\left(y_{i_{2}}\right)\ldots a_{\left|H\right|}f\left(y_{i_{\left|H\right|}}\right)a_{\left|H\right|+1}:f\in\left\{ f_{1},f_{2},\ldots,f_{n}\right\} \right\} \subseteq A.
\]
}{\Large\par}

{\Large{}Now, using same argument as in the first part of the proof
, we have a sub IP-set $FS\left(\left\langle z_{n}\right\rangle _{n=1}^{\infty}\right)\subseteq FS\left(\left\langle y_{n}\right\rangle _{n=1}^{\infty}\right)$
such that for any $K\in\mathcal{P}_{f}\left(\mathbb{N}\right)$ and
$\left(z_{j_{1}},z_{j_{2}},\ldots,z_{j_{\left|K\right|}}\right)$,
there exists $b=\left(b_{1},b_{2},\ldots,b_{\left|K\right|+1}\right)$
such that 
\[
\left\{ \prod_{i=1}^{\left|K\right|}b_{i}g\left(z_{j_{i}}\right)b_{\left|K\right|+1}:g\in\left\{ g_{1},g_{2},\ldots,g_{n}\right\} \right\} \subseteq B.
\]
}{\Large\par}

{\Large{}So, 
\begin{align*}
\{\left(a_{1}f\left(z_{j_{1}}\right),b_{1}g\left(z_{j_{1}}\right)\right),\ldots,\left(a_{\left|K\right|}f\left(z_{j_{\left|K\right|}}\right),b_{\left|K\right|}g\left(z_{j_{\left|K\right|}}\right)\right)\cdot\left(a_{\left|K\right|+1},b_{\left|K\right|+1}\right) & :.\\
\left(f,g\right)\in G\subseteq A\times B\}
\end{align*}
}{\Large\par}

{\Large{}So, $A\times B$ is a J-set.}{\Large\par}
\end{proof}
\begin{thm}
{\Large{}\label{Theorem 3.28 } Let $\left(S_{1},\cdot\right)$ and
$\left(S_{2},\cdot\right)$ be two semigroup and $A\subseteq S_{1}$
and $B\subseteq S_{2}$ be two $C$-sets. Then $A\times B$ is a $C$-set
in $S_{1}\times S_{2}$.}{\Large\par}
\end{thm}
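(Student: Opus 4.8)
The plan is to follow exactly the template used for the quasi-central case in Theorem \ref{Theorem 2.16} and for the commutative $C$-set case in Theorem \ref{Theorem 3.25}, simply substituting the non-commutative product-of-$J$-sets result, Theorem \ref{Theorem 3.27}, for its commutative predecessor. First I would apply the combinatorial characterization of $C$-sets in Theorem \ref{Theorem 1.12} to the $C$-set $A \subseteq S_1$, obtaining a decreasing sequence $\langle A_n \rangle_{n=1}^{\infty}$ of subsets of $A$ satisfying conditions \ref{property 12.1} and \ref{property 12.2}, and likewise to $B \subseteq S_2$ to obtain $\langle B_n \rangle_{n=1}^{\infty}$. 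I would then form the product chain
\[
A \times B \supseteq A_1 \times B_1 \supseteq A_2 \times B_2 \supseteq \ldots \supseteq A_n \times B_n \supseteq \ldots
\]
and claim that it witnesses, via Theorem \ref{Theorem 1.12}, that $A \times B$ is a $C$-set in $S_1 \times S_2$.

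It then remains to check the two conditions. Condition \ref{property 12.2} is where the real work sits: since each $A_n$ is a $J$-set in $S_1$ and each $B_n$ is a $J$-set in $S_2$, Theorem \ref{Theorem 3.27} gives immediately that each $A_n \times B_n$ is a $J$-set in $S_1 \times S_2$. For condition \ref{property 12.1}, I would fix $n \in \mathbb{N}$ and $(x,y) \in A_n \times B_n$. Because $x \in A_n$ there is $m_1$ with $A_{m_1} \subseteq x^{-1} A_n$, and because $y \in B_n$ there is $m_2$ with $B_{m_2} \subseteq y^{-1} B_n$; putting $m = \max\{m_1, m_2\}$ and using that both sequences decrease yields $A_m \subseteq x^{-1} A_n$ and $B_m \subseteq y^{-1} B_n$ simultaneously. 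Unwinding the definition $(x,y)^{-1} C = \{(s,t) : (xs, yt) \in C\}$ gives the identity $(x,y)^{-1}(A_n \times B_n) = (x^{-1} A_n) \times (y^{-1} B_n)$, whence $A_m \times B_m \subseteq (x,y)^{-1}(A_n \times B_n)$, which is precisely condition \ref{property 12.1}.

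I do not anticipate any serious obstacle, because all the genuinely new content has already been absorbed into Theorem \ref{Theorem 3.27}; everything else is the same bookkeeping as in Theorems \ref{Theorem 2.16} and \ref{Theorem 3.25}. The only point meriting a moment of care is the coordinatewise behaviour of left quotients in a non-commutative product, namely the identity $(x,y)^{-1}(A_n \times B_n) = (x^{-1} A_n) \times (y^{-1} B_n)$, which holds directly from the definition and does not require commutativity. With conditions \ref{property 12.1} and \ref{property 12.2} both established, the product chain certifies through Theorem \ref{Theorem 1.12} that $A \times B$ is a $C$-set in $S_1 \times S_2$, finishing the argument.
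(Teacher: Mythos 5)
Your proposal is correct and is exactly the argument the paper intends: the paper's proof of this theorem simply defers to Theorem \ref{Theorem 3.25} (which in turn defers to Theorem \ref{Theorem 2.16}), and what you have written out --- chains from Theorem \ref{Theorem 1.12}, the $J$-set condition via the non-commutative product result Theorem \ref{Theorem 3.27}, and the coordinatewise verification of condition \ref{property 12.1} using $(x,y)^{-1}(A_n\times B_n)=(x^{-1}A_n)\times(y^{-1}B_n)$ --- is precisely that deferred bookkeeping, done carefully.
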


\begin{proof}
{\Large{}The proof is similar to the proof of theorem \ref{Theorem 3.25}
and so we leave it to the reader.}{\Large\par}
\end{proof}
\textbf{\Large{}Acknowledges:}{\Large{} The first author acknowledges
the grant UGC-NET SRF fellowship with id no. 421333 of CSIR-UGC NET
December 2016.}{\Large\par}


\begin{thebibliography}{10}
{\Large{}\bibitem[1]{key-1} V. Bergelson and N. Hindman, Additive
and multiplicative ramsey theorems in $\mathbb{N}-$some elementary
results, Combinatorics, Probability and Computing. 1993 2, 221-241.}{\Large\par}

{\Large{}\bibitem[2]{key-2} D. De, N. Hindman, and D. Strauss, A
new and stronger Central Sets Theorem, Fund. Math. 199 (2008), 155-175.}{\Large\par}

{\Large{}\bibitem[3]{key-3} H. Furstenberg, Recurrence in ergodic
theory and combinatorical number theory, Princeton University Press,
Princeton, 1981.}{\Large\par}

{\Large{}\bibitem[4]{key-4} N. Hindman, Notions of size in a semigroup
-{}- an update from a historical perspective (to appear in Semigroup
Forum)}{\Large\par}

{\Large{}\bibitem[5]{key-5} N. Hindman, A. Maleki, and D. Strauss,
Central sets and their combinatorial characterization, J. Comb. Theory
(Series A) 74 (1996), 188-208.}{\Large\par}

{\Large{}\bibitem[6]{key-6}N. Hindman and D. Strauss, Cartesian product
of sets satiesfying central set theorem, Topology Proceedings 35 (2010),
203-223.}{\Large\par}

{\Large{}\bibitem[7]{key-7} N.Hindman, D.Strauss, Algebra in the
Stone-\v{C}ech Compactifications: theory and applications, second
edition, de Gruyter, Berlin, 2012.}{\Large\par}

{\Large{}\bibitem[8]{key-8} N.Hindman, L. Jones and D. Strauss, The
relationships among many notions of largeness for subsets of a semigroup,
Semigroup Forum 99 (2019), 9-31.}{\Large\par}

{\Large{}\bibitem[9]{key-9} N. Hindman, Finite sums from sequences
within cells of a partition of M. J. Comb. Theory (Series A) (1974)
17 1-11.}{\Large\par}

{\Large{}\bibitem[10]{key-10} N.Hindman and K. Pleasant, Central
Sets Theorem for arbitrary adequate partial semigroups, Preprint}{\Large\par}

{\Large{}\bibitem[11]{key11} R. McCutcheon, Elemental methods in
ergodic ramsey theory, Sringer-Verlag Berlin Heidelberg 1999.}{\Large\par}
\end{thebibliography}
\end{document}